\algnewcommand{\LIf}[1]{\State\algorithmicif\ {\footnotesize #1}\ \algorithmicthen}
\algnewcommand{\EndLIf}{\unskip\ \algorithmicend\ \algorithmicif}
\tikzstyle{line} = [draw, -latex']
\newcommand{\blue}[1]{{\color{black}#1}}
\newcommand{\myNorm}[2][]{||#2||_{#1}}
\newcommand{\mySP}[3][]{(#2,#3)_{#1}}
\newcommand{\myProj}[1]{\Pi_{#1}}
\newcommand{\Hkp}[1][k,p]{H^{#1}(\Omega)}
\newcommand{\myU}{\mathcal{U}}
\newcommand{\myY}{\mathcal{Y}}
\newcommand{\myT}{\mathcal{T}}
\newcommand{\myHeins}{\mathcal{H}}
\newcommand{\myHzwei}{\myY}
\newcommand{\mySPU}[2]{\mySP[\myU]{#1}{#2}}
\newcommand{\mySPY}[2]{\mySP[\myY]{#1}{#2}}
\newcommand{\myNormU}[1]{\myNorm[\myU]{#1}}
\newcommand{\myNormY}[1]{\myNorm[\myY]{#1}}
\newcommand{\mya}[1][\mu]{a_{#1}}
\newcommand{\myb}[1][\mu]{b_{#1}}
\newcommand{\myB}[1][\mu]{B_{#1}}
\newcommand{\myG}[1][\mu]{G_{#1}}
\newcommand{\myalpha}[1][\mu]{\alpha_a(#1)}
\newcommand{\myalphaLB}[1][\mu]{\alpha_a^{\rm{LB}}(#1)}
\newcommand{\mybeta}[1][\mu]{\beta_B(#1)}
\newcommand{\mybetaLB}{\beta_B^{\rm{LB}}}
\newcommand{\mydelta}[1][\mu,\lambda]{\alpha_A^0(#1)}
\newcommand{\mydeltaLB}{\alpha^{\rm{LB}}_{A}}
\newcommand{\mybUB}[1][\mu]{\gamma_{b}^{\rm{UB}}(#1)}
\newcommand{\mykappa}[1][\mu]{\beta_{\myT}(#1)}
\newcommand{\myybk}[1][\mu]{y_{\rm{bk},#1}}
\newcommand{\myfbk}{f_{\rm{bk},\mu}}
\newcommand{\myytrue}{y_{\rm{true}}}
\newcommand{\mydata}{y_{\rm{d}}}
\newcommand{\myu}[1][\mu]{u^*_{#1}}
\newcommand{\myy}[1][\mu]{y^*_{#1}}
\newcommand{\myd}[1][\mu]{d^*_{#1}}
\newcommand{\myw}[1][\mu]{p^*_{#1}}
\newcommand{\myheins}[1][\mu]{h^*_{#1}}
\newcommand{\myhzwei}[1][\mu]{\myw[#1]}
\newcommand{\myURB}{\mathcal{U}_{\rm{R}}}
\newcommand{\myYRB}{\mathcal{Y}_{\rm{R}}}
\newcommand{\myHeinsRB}{\mathcal{H}_{\rm{R}}}
\newcommand{\myuRB}[1][\mu]{u^*_{\rm{R,}#1}}
\newcommand{\myyRB}[1][\mu]{y^*_{\rm{R,}#1}}
\newcommand{\mydRB}[1][\mu]{d^*_{\rm{R,}#1}}
\newcommand{\mywRB}[1][\mu]{p^*_{\rm{R,}#1}}
\newcommand{\myheinsRB}[1][\mu]{h_{\rm{R,}#1}^*}
\newcommand{\myhzweiRB}[1][\mu]{\mywRB[#1]}
\newcommand{\myybkRB}[1][\mu]{y_{\rm{bk,R},#1}}
\newcommand{\myeu}{e_{u}}
\newcommand{\myey}{e_{y}}
\newcommand{\myed}{e_{d}}
\newcommand{\myew}{e_{p}}
\newcommand{\myNreins}{\myNorm[\myU']{r_{u}}}
\newcommand{\myNrzwei}{\myNorm[\myY']{r_{p}}}
\newcommand{\myNrdrei}{\myNorm[\myY']{r_{y}}}
\newcommand{\myru}{r_{u}}
\newcommand{\myry}{r_{y}}
\newcommand{\myrp}{r_{p}}
\newcommand{\myM}{\mathcal{M}}
\newcommand{\myN}{\mathcal{N}}
\newcommand{\myvecu}{\mathbf{u}^{*}}
\newcommand{\myvecy}{\mathbf{y}^{*}}
\newcommand{\myvecw}{\mathbf{p}^{*}}
\newcommand{\myvecfbk}{\mathbf{f}_{\rm{bk}}}
\newcommand{\myvecdata}{\mathbf{y}_{\rm{d}}}
\newcommand{\myvecmeasured}{\mathbf{m}_{\rm{d}}}
\newcommand{\myvecuRB}[1][]{\mathbf{u}^{*#1}_{\rm{R}}}
\newcommand{\myvecyRB}[1][]{\mathbf{y}^{*#1}_{\rm{R}}}
\newcommand{\myvecwRB}[1][]{\mathbf{p}^{*#1}_{\rm{R}}}
\newcommand{\myvecdataRB}{\mathbf{y}_{\rm{d,R}}}
\newcommand{\myvecfbkRB}{\mathbf{f}_{\rm{bk,R}}}
\newcommand{\myDu}[1][\mu]{\Delta_u(#1)}
\newcommand{\myDy}[1][\mu]{\Delta_y(#1)}
\newcommand{\myDd}[1][\mu]{\Delta_d(#1)}
\newcommand{\myDw}[1][\mu]{\Delta_p(#1)}
\newcommand{\myGdir}{\Gamma_{\rm{D}}}
\newcommand{\myGneu}{\Gamma_{\rm{N}}}
\newcommand{\myGin}{\Gamma_{\rm{in}}}
\newcommand{\myYcont}{\mathcal{Y}_{\rm{e}}}
\newcommand{\myUcont}{\mathcal{U}_{\rm{e}}}
\newcommand{\myutrue}{u_{\rm{true}}}
\newcommand{\mymutrue}{\mu _{\rm{true}}}
\newcommand{\mymuzwei}[1][\lambda]{\mu^{*,#1}_{2}}
\newcommand{\mymudrei}[1][\lambda]{\mu^{*,#1}_{3}}
\newcommand{\mymui}[1][\lambda]{\mu^{*,#1}_{i}}
\newcommand{\mymuiRB}[1][\lambda]{\mu^{*,#1}_{i\rm{,R}}}
\newcommand{\myustart}{u_{\rm{start}}}
\newcommand{\myJeins}[1][\lambda]{J_1^{#1}}
\newcommand{\myJzwei}[1][\lambda]{J_2^{#1}}
\newcommand{\myJdrei}[1][\lambda]{J_3^{#1}}
\newcommand{\myJi}[1][\lambda]{J_i^{#1}}
\newcommand{\myJeinsRB}[1][\lambda]{J_{1\rm{,R}}^{#1}}
\newcommand{\myJzweiRB}[1][\lambda]{J_{2\rm{,R}}^{#1}}
\newcommand{\myJdreiRB}[1][\lambda]{J_{3\rm{,R}}^{#1}}
\newcommand{\myJiRB}[1][\lambda]{J_{i\rm{,R}}^{#1}}
\newcommand{\myyPBDW}{y_{\infty,\mu}}
\newcommand{\myyPBDWhat}{\hat{y}_{\infty,\mu}}
\newcommand{\myetaPBDW}{d_{\infty,\mu}}
\newcommand{\myuPBDW}{u_{\infty,\mu}}
\newcommand{\myYmu}[1][\mu]{\mathcal{Y}_{#1}}
\newcommand{\myYmuRB}[1][\mu]{\mathcal{Y}_{\rm{R,}#1}}
\newcommand{\myetainf}{\underline{\eta}(\mu)}
\newcommand{\myetasup}{\overline{\eta}(\mu)}
\newcommand{\myetainfRB}{\underline{\eta}_{\rm{R}}(\mu)}
\newcommand{\mykappaRBmu}[1][\mu]{\beta_{\myT,\rm{R}}(#1)}
\newcommand{\mygammaa}[1][\mu]{\gamma_{\rm{a}}(#1)}
\newcommand{\mygammab}[1][\mu]{\gamma_{\rm{b}}(#1)}
\newcommand{\mygammaA}{\gamma_{\rm{A}}}
\newcommand{\mygammaB}[1][\mu]{\gamma_{\rm{B}}(#1)}
\begin{document}

\title{
3D-VAR for Parametrized Partial Differential Equations: A Certified Reduced Basis Approach
\thanks{This work was supported by the Excellence Initiative of the German federal and state governments and the German Research Foundation through Grant GSC 111.}
}

\titlerunning{A certified reduced basis approach for 3D-\blue{VAR}}        

\author{	Nicole Aretz-Nellesen         	\and
        			 Martin A. Grepl			\and \\
        			 Karen Veroy
}

\institute{
Nicole Aretz-Nellesen \at
Aachen Institute for Advanced Study in Computational Engineering Science (AICES),
RWTH Aachen University, Schinkelstra{\ss}e 2, 52062 Aachen, Germany \\
\email{nellesen@aices.rwth-aachen.de}
\and
Martin A. Grepl \at
Numerical Mathematics (IGPM), RWTH Aachen University, Templergraben 55, 52056 Aachen, Germany \\
\email{grepl@igpm.rwth-aachen.de}
\and
Karen Veroy \at
Aachen Institute for Advanced Study in Computational Engineering Science (AICES)
and Faculty of Civil Engineering, RWTH Aachen University, Schinkelstraße 2, 52062 Aachen \\
\email{veroy@aices.rwth-aachen.de}
}

\date{Received: date / Accepted: date}
%
%
\maketitle
\begin{abstract}
In this paper, we propose a reduced order approach for 3D variational data assimilation governed by parametrized partial differential equations. In contrast to the classical 3D-\blue{VAR} formulation that penalizes the measurement error directly, we present a modified formulation that penalizes the experimentally-observable misfit in the measurement space. Furthermore, we include a model correction term that allows to obtain an improved state estimate. We begin by discussing the influence of the measurement space on the amplification of noise and prove a necessary and sufficient condition for the identification of a ``good" measurement space. We then propose a certified reduced basis (RB) method for the estimation of the model correction, the state prediction, the adjoint solution and the observable misfit with respect to the true state for real-time and many-query applications. {\it A posteriori} bounds are proposed for the error in each of these approximations. Finally, we introduce different approaches for the generation of the reduced basis spaces and the stability-based selection of measurement functionals. The 3D-VAR method and the associated certified reduced basis approximation are tested in a parameter and state estimation problem for a steady-state thermal conduction problem with unknown parameters and unknown Neumann boundary conditions.

\keywords{variational data assimilation
\and 3D-VAR
\and model correction
\and reduced basis method
\and state estimation
\and parameter estimation
\and {\it a posteriori} error estimation
}
\end{abstract}


\section{Introduction}
\label{sec:introduction}

In numerical simulations, mathematical models --- such as ordinary or partial differential equations (PDEs) --- are widely used to predict the state or behavior of a physical system. The goal of variational data assimilation is to improve state predictions through the incorporation of measurement data, e.g., experimental observations, into the mathematical model. \blue{Variational data assimilation is prevalent in meteorology \cite{DT1986,Lorenc1986,Lorenc1981} and oceanography \cite{Bennett,Bennett_2002}, for example in weather forecasting and ocean circulation modeling.}  Prominent examples include the 3D- and 4D-\blue{VAR} methods, which weigh deviations from a prior best-knowledge (initial) state against differences with respect to measurement data; see the recent texts~\cite{DataAssimilation,Reich} and references therein for a discussion of variational data assimilation.

Whereas 4D-\blue{VAR} considers dynamical systems (i.e. three space dimensions plus time), and usually aims to estimate the (unknown) initial condition of the system, 3D-\blue{VAR} considers the stationary case. In this paper, we propose a certified reduced order approach for a modified 3D-VAR method for parametrized, coercive PDEs. Compared to the classical 3D-\blue{VAR} method, our modified formulation penalizes the experimentally-observable misfit in the measurement space instead of the difference in the measurements. Furthermore, we account for an imperfect model by introducing a model bias in the formulation, similar to the weak-constraint 4D-\blue{VAR} approach~\cite{Tremolet2006}.

The proposed method makes data-informed modifications to a best-knowledge background model to generate a compromise between the original model and the observed measurements.
The method thereby accounts for the physical integrity of the state prediction and can be used to estimate unknown model properties and parameters along with the state. The 3D-\blue{VAR} problem --- and variational data assimilation in general --- is usually cast as an optimization problem and has very close connections to optimal control theory~\cite{VH2006}.
In addition, the optimality condition has a saddle-point structure which can be analyzed using standard functional analysis arguments. Based on these observations we can identify necessary and sufficient properties of the measurement space that \textit{increase} the stability of the 3D-VAR formulation and provide a practical procedure for the generation of a measurement space meeting these conditions.

This paper builds upon the model-data weak approach presented in~\cite{YANO2013937}, the parametrized-background data-weak (PBDW) approach to variational data assimilation introduced in~\cite{PBDW,PBDW2}, and the certified reduced-order approach for 4D-\blue{VAR} in~\cite{4DVAR}.
The model-data weak formulation in \cite{YANO2013937} finds a state estimate by minimizing the distance between state and observed state while penalizing model corrections.
This method indirectly accepts any kind of model modification in the dual space and can thereby account for unpredicted behaviour.  In the parametrized-background data-weak (PBDW) approach to variational data assimilation introduced in \cite{PBDW,PBDW2}, a state estimate is obtained by projecting an observation from the measurement space onto a space featuring model properties, such as an RB space that approximates the solution manifold of a parametrized PDE.
The PBDW framework has recently been extended with adaptive \cite{Taddei17,2017arXiv171209594M} and localization \cite{Taddei_localization} strategies, and has been recast in a multispace setting \cite{doi:10.1137/15M1025384}.
In addition to the discussions in the PBDW literature, the optimal selection of measurements with greedy orthogonal matching pursuit (OMP) algorithms in state estimation has been analyzed recently in~\cite{Olga}.
We build upon this work in the construction of our measurement space.

The connection and distinctions between 3D-\blue{VAR} and the PBDW formulation has already been discussed in~\cite{PBDW,Taddei17}. We also note that the PBDW approach is related to the generalized empirical interpolation method~\cite{Maday2013,Maday_gEIM_2015} and gappy-POD~\cite{WILLCOX2006208}, see~\cite{PBDW,Taddei17} for a thorough discussion. We will show that --- under certain assumptions on the spaces {\it and} in the limit of the regularization parameter going to infinity --- our modified 3D-\blue{VAR} formulation is equivalent to the PBDW formulation. However, there are also decisive differences between the two formulations.

After a brief discussion of the mathematical background in Section~\ref{sec:preliminaries}, we present the following main contributions:
\begin{itemize}
\item We introduce a data-weak reformulation of the classical 3D-VAR method in Section \ref{sec:truth}. We present a detailed analysis of the method's stability with respect to the properties of the original best-knowledge model, the model bias, and the choice of the measurement space. We show that our method is stable in the sense of Hadamard independently of the choice of measurement functionals and uniformly over the parameter domain. Furthermore, we identify a necessary and sufficient property of the measurement space that further increases the stability and restricts the amplification of measurement noise when the 3D-VAR method emphasizes closeness to the data over the best-knowledge model.
\item In Section \ref{sec:RB}, we present the RB method for the 3D-\blue{VAR} problem and develop online computationally efficient approximations and {\it a posteriori} error bounds for the state estimate, the adjoint solution, the model correction, and the misfit. This significant reduction in computational complexity makes the 3D-\blue{VAR} method feasible for repeated computations with measurement noise and varying parameters. Furthermore, the
RB approximation aids in the efficient selection of the measurements.
\item In Section \ref{sec:spaces}, we integrate the theoretical results from both the stability and error analyses to propose an iterative selection of the measurement functionals. More specifically, we propose an algorithm that employs
the OMP measurement selection from \cite{Olga}
in a greedy manner over the parameter domain to generate the measurement space. The space can be tailored specifically to the stable estimation of parameters and model properties of the 3D-VAR method from measurement data. We also discuss a construction of the reduced basis spaces which does not require the measurements to be known {\it a priori}.
\end{itemize}
In Section \ref{sec:experiments} we present numerical results for a steady state thermal conduction problem with uncertain parameters and unknown Neumann boundary condition, and investigate the influence of measurement noise upon the estimation of the uncertain parameters and boundary condition.


\section{Preliminaries}
\label{sec:preliminaries}

In this section, we specify the mathematical framework within which we cast the 3D-VAR method in the following section. We introduce the required spaces and forms, and list all our assumptions upon them.

We start with the state space $\myY$, which we take to be a Hilbert space with inner product $\mySPY{\cdot}{\cdot}$ and induced norm $\myNormY{\cdot}$. Likewise, we consider a Hilbert space $\myU$ for the regularizing model correction, with norm $\myNormU{\cdot}$ induced by the inner product $\mySPU{\cdot}{\cdot}$. In the numerical implementation, these spaces are typically finite-dimensional subspaces --- in the RB literature usually referred to as ``truth'' spaces --- of Sobolev- or $L^2$-spaces, e.g. finite element spaces. For notational convenience, we omit the explicit distinction between the infinite dimensional and finite dimensional setting in the following since the results presented hold for both cases (with the appropriate definitions). The RB approximation in section \ref{sec:RB} utilizes closed subspaces $\myYRB \subset \myY$ and $\myURB \subset \myU$ that represent the most dominant model dynamics.

For the incorporation of measurement data, we additionally consider a nontrivial, finite-dimensional subspace $\myT \subset \myY$ as measurement space and let the operator $\myProj{\myT} : \myY \rightarrow \myT$ denote the orthogonal projection onto this space. \blue{We assume $\dim \myT < \infty$, but note that most of our analysis holds for a generic closed subspace $\myT \subset \myY$.}
\begin{remark}\label{rmk:T}
\blue{We} briefly \blue{explain how the space $\myT$} can be linked to physical measurements:
Suppose we are given $L < \infty$ linearly independent measurement functionals $g_l \in \myY'$, $l \in \{1,...,L\}$.
We can then choose the hierarchical space $\myT$ as the span of the Riesz representations of the measurement functions, see \cite{BENNETT1985129,PBDW}, via
\begin{align} \label{eq:def:T}
\myT ~=~\text{span}\{~\tau_l \in \myT:~1 \le l \le L \text{ and } \mySPY{\tau_l}{\tau} = g_l(\tau)~\forall~\tau \in \myT~\}.
\end{align}
For any state $y \in \myY$, the projection $\myProj{\myT}y$ is then the only state in $\myT$ that yields the same measurements as $y$, i.e. $g_l(\myProj{\myT}y) = g_l(y) \in \mathbb{R}$ for $l=1,...,L$. In this context, we can consider $\myProj{\myT}y$ to be the experimentally observed part of $y$. Due to the linear independence of the measurement functionals, any set $\mathbf{m} = (m_l)_{l=1}^L \in \mathbb{R}^L$ is obtained as the measurement data of exactly one state \blue{$\mydata(\mathbf{m})$} in $\myT$\blue{, and -- due to the projection theorem -- of all states in $\mydata(\mathbf{m}) + \myT^{\perp}$. More specifically, t}wo states in $\myY$ yield the same measurements if and only if their difference lies in $\myT^{\perp}$. In practice, the number $L$ of measurements should be kept small to limit experimental expenses.
\end{remark}

We let $\mathcal{C} \subset \mathbb{R}^d$ be a compact set of all admissible parameters. For this set, we introduce three (possibly) parameter-dependent forms, namely $\myfbk \in \myY'$ and the bilinear forms $\mya : \myY \times \myY \rightarrow \mathbb{R}$ and $\myb : \myU \times \myY \rightarrow \mathbb{R}$.
\blue{Throughout this paper, the index $\rm{bk}$ stands for ``best-knowledge," whereas the index $\mu$ signifies the dependence on a parameter $\mu \in \mathcal{C}$.}
The first two forms represent the best-knowledge model dynamics
\begin{align} \label{eq:bk}
\text{find } \myybk \in \myY \text{ such that } \mya(\myybk,\psi) &= \myfbk(\psi) \qquad \forall~\psi \in \myY,
\end{align}
whereas for $u \in \myU$, $\myb(u,\cdot) \in \myY'$ denotes the induced model modification.

We make the assumptions that $\mya$ is uniformly coercive over $\mathcal{C}$\footnote{Throughout this paper, we adopt the notational convention that infima and suprema exclude elements with norm 0.},
\begin{align}\label{eq:def:alpha}
\exists~\underline{\alpha}_a > 0 \text{ s.t. }
\myalpha :=  \inf _{y \in \myY} \frac{\mya(y,y)}{\myNormY{y}^2} ~\ge~ \underline{\alpha}_a \quad \forall \mu \in \mathcal{C},
\end{align}
and that both bilinear forms $\mya$ and $\myb$ are uniformly bounded, i.e.,
\begin{equation}\label{eq:uniformBound:ab}
\begin{aligned}
&\exists~\overline{\gamma}_a > 0 \text{ s.t. } 0 ~<~
\mygammaa := \sup_{y \in \myY} \sup_{z \in \myY}  \frac{\mya(y,z)}{\myNormY{y} \myNormY{z}} ~\le~ \overline{\gamma}_a ~<~ \infty \quad \forall \mu \in \mathcal{C},\\
&\exists~ \overline{\gamma}_b > 0 \text{ s.t. } 0 ~<~ \mygammab := \sup_{u \in \myU}\sup_{y \in \myY}   \frac{\myb(u,y)}{\myNormU{u}\myNormY{y}} ~\le~ \overline{\gamma}_b ~<~ \infty \quad \forall \mu \in \mathcal{C}.
\end{aligned}
\end{equation}
The conditions \eqref{eq:def:alpha} and \eqref{eq:uniformBound:ab} guarantee that the best-knowledge model \eqref{eq:bk} and the 3D-VAR formulation introduced in the next section are uniformly stable and that the error of the RB approximation is quasi-optimal with uniformly bounded constants over $\mathcal{C}$. In anticipation of the a posteriori error estimation procedure, we additionally presuppose that we can compute a lower bound  $\myalphaLB \le \myalpha$ and an upper bound  $\mybUB \ge \mygammab$ at reasonably low cost for all $\mu \in \mathcal{C}$, e.g., through the min-$\theta$-approach or the successive constraint method~\cite{SCM,RB_Patera_2007}.

For the efficient computation of the RB solution and a posteriori error bounds in an offline-online procedure, we make the assumption that the $\mu$-dependent forms $\mya$, $\myb$ and $\myfbk$ are affine in functions of the parameter, i.e.,
\begin{equation} \label{eq:affine}
\begin{aligned}
\mya~=~ \textstyle\sum\limits _{\vartheta = 1} ^{\Theta_a} \theta^{\vartheta}_a(\mu) a^{\vartheta}  \qquad
\myb~=~ \sum\limits _{\vartheta = 1} ^{\Theta_b} \theta^{\vartheta}_b(\mu) b^{\vartheta} \qquad
\myfbk~=~ \sum\limits_{\vartheta = 1} ^{\Theta_f} \theta^{\vartheta}_f(\mu) f^{\vartheta}_{\rm{bk}},
\end{aligned}
\end{equation}
where the coefficient functions $\theta_a$, $\theta_b$, $\theta_f : \mathcal{C} \rightarrow \mathbb{R}$ are continuous in the parameter $\mu$, and the bilinear forms $a^{\vartheta}$, $b^{\vartheta} $ as well as the linear forms $f^{\vartheta}_{\rm{bk}}$ are parameter-independent.
In the nonaffine case, (generalized) empirical interpolation techniques may be used to construct an affine approximation \cite{Barrault,Grepl_nonaffine,Maday_gEIM_2015,Maday2013}.


\section{3D-VAR Formulation}
\label{sec:truth}
In the following, we introduce the 3D-VAR formulation for a coercive, parameter-dependent PDE.
The method aims at finding a weighted compromise between a best-knowledge model and  measurement data by making a data-informed perturbation of the model. After a reformulation as a saddle-point problem, a stability analysis shows how the choice of measurement functionals influences the amplification of errors in the measurements and the approximation of the best-knowledge source term. This leads to a practical criterion for the selection of suitable measurement functionals.

\subsection{Problem Statement}
\label{sec:intro3DVAR}

The main goal in data assimilation is to estimate the (unknown) state $\myytrue \in \myY$ of a physical system. To this end, we suppose that a best-knowledge mathematical model exists in terms of the underlying elliptic PDE \eqref{eq:bk} whose unique solution $\myybk$ provides a first approximation of $\myytrue$. However, any model can only provide an approximation to the underlying physics due to simplifications in the derivation of the governing PDE, and boundary conditions, or due to approximations and uncertainties of the system geometry or loading, $\myfbk$. Since $\myybk \neq \myytrue$ in general, we thus introduce a perturbation of the best-knowledge problem \eqref{eq:bk} in order to allow for a better approximation of the state $\myytrue$. Here, we specifically consider model corrections of the form $\myb(u,\cdot) \in \myY '$ with $u \in \myU$, leading to the modified problem:
\begin{equation}\label{eq:forward}
\begin{aligned}
&\text{For a given \blue{model modification }}u_0 \in \myU \text{ find } y_{\mu} = y_{\mu}(u_0) \in \myY \text{ s.t. }\\
&\mya(y_{\mu},\psi) = \myfbk (\psi) + \myb(u_0,\psi) \quad \forall~\psi \in \myY.
\end{aligned}
\end{equation}
Within this framework, we now aim to construct an improved approximation of $\myytrue$ by finding a ``good" model correction $u \in \myU$.

To do this, we incorporate knowledge \blue{of the true state $\myytrue$ in form of an approximation $\mydata \in \myT$ to $\myProj{\myT}\myytrue$.
 In the notation of Remark \ref{rmk:T}, this approximation might be $\mydata = \mydata(\myvecmeasured) \in \myT$, resulting from measurement data $\myvecmeasured \in \mathbb{R}^L$, $(\myvecmeasured)_l = g_l(\myytrue) + \varepsilon_l$ of the true state with noise $\varepsilon_l \in \mathbb{R}$.
To approximate $\myytrue \in \myProj{\myT}\myytrue + \myT^{\perp} \approx \mydata + \myT^{\perp}$, the}
model correction $u \in \myU$ should thus be chosen such that the solution $y_{\mu}(u)$ of the modified model \eqref{eq:forward} for $u_0 = u$ is close to $\blue{\mydata + \myT^{\perp}}$.
\blue{By the projection theorem, this corresponds to choosing $u$ such that} the experimentally-observable misfit $d_{\mu}(u) = \mydata-\myProj{\myT}y_{\mu}(u)$ between \blue{$y_{\mu}(u)$ and} the data state $\mydata$, measured in the $\myY$-norm, \blue{is} small.
At the same time, however, we want the model correction, measured in $\myNormU{u}$, to be small such that the state \blue{$y_{\mu}(u)$} stays close to our best-knowledge model \eqref{eq:bk}. Note that \blue{$y_{\mu}(0) = \myybk$} for $u_0 = 0 \in \myU$ in~\eqref{eq:forward}. The 3D-VAR formulation thus takes the form
\begin{equation}\label{prob:truth}
\begin{aligned}
& \min _{(u,y,d) \in \, \myU \times \myY \times \myT} \frac{1}{2} \myNormU{u}^2 + \frac{\lambda}{2} \myNormY{d}^2 ~\text{ s.t. }\\
& \left.
\begin{array}{rll}
\mya(y,\psi) &= \myfbk (\psi) + \myb(u,\psi) &\quad  \forall~\psi \in \myY,\\
\mySP{y+d}{\tau} &= \mySP{\mydata}{\tau} &\quad \forall ~\tau \in \myT,
\end{array}
\right.
\end{aligned}
\end{equation}
where the regularization parameter $\lambda > 0$ quantitatively expresses the trust in the original model \eqref{eq:bk} over the validity of the measurements: a small factor $\lambda$ prioritizes proximity to the original model, whereas a large $\lambda$ favours closeness to the data state $\mydata$. By specifically considering model corrections and the distance in the measurement space $\myT$ (instead of the difference in measurements), we follow the model-data weak approach to variational data assimilation introduced in \cite{YANO2013937}.

Denoting the optimal solution by $(\myu,\myy,\myd) \in \myU \times \myY \times \myT$, then $\myy = y_{\mu}(\myu)$ is the solution of the perturbed model \eqref{eq:forward} with model correction $u_0 = \myu$, and $\myd = \mydata-\myProj{\myT}\myy$ is the experimentally-observable misfit between the data state $\mydata$ and the 3D-VAR solution state $\myy$.
The minimization over $\myNormU{u}$ in the cost function enforces that the model correction $\myu$ contains only components that actively decrease the misfit, i.e. $\myu$ is perpendicular to the closed subspace
\begin{align} \label{eq:def:U0}
\myU_0(\mu) := \{~ u \in \myU:~ \myProj{\myT}y_{\mu} = 0 \text{ where } \mya(y_{\mu},\psi) ~=~ \myb(u,\psi)~\forall \psi \in \myY~\}.
\end{align}
If the measurements can be reproduced in the modified model \eqref{eq:forward}, that is if $\mydata = \myProj{\myT}y_{\mu}(\myutrue)$ for a solution $y_{\mu}(\myutrue)$ of \eqref{eq:forward} with $u_0 = \myutrue \in \myU$, then $\frac{1}{2}\myNormU{\myutrue}^2$ is a natural, $\lambda$-independent upper bound for the cost function in \eqref{prob:truth}.
Therefore, $\myNormY{\myd} \in \mathcal{O}(\lambda^{-1/2})$, and $\myNormU{\myu} \le \myNormU{\myutrue}$ is bounded independently of $\lambda$.

\begin{remark}
Formally, the 3D-VAR problem \eqref{prob:truth} is a Tikhonov regularisation
\begin{align}\label{eq:Tikhonov}
\min _{u \in \myU}\frac{1}{\lambda} \myNormU{u}^2 +  \myNormY{Q_{\mu}u-\mydata +\myProj{\myT}\myybk} ^2
\end{align}
with regularisation parameter $1/\lambda$ and bounded linear operator $Q_{\mu} : \mathcal{U} \rightarrow \mathcal{T}$, $Q_{\mu}u_0 := \myProj{\myT}y_{\mu}(u_0)$, where $y_{\mu}(u_0) \in \myY$ is the unique solution of \eqref{eq:forward} with source term $\myfbk = 0$. With standard theory for Tikhonov regularisations (see e.g.~\cite{Tikhonov_1995})  there exists a unique solution $\myu = \myu(\lambda,\mydata) \in \myU$ to \eqref{eq:Tikhonov} and hence to the 3D-VAR problem \eqref{prob:truth} for all data states $\mydata \in \myT$ and all $\lambda > 0$. In particular, the solution depends continuously on the data with $\myNormU{\myu(\lambda,\mydata)} \le \sqrt{\lambda} \myNormY{\mydata - \myProj{\myT}\myybk}$. Thus, the 3D-VAR problem \eqref{prob:truth} is well-posed.
\end{remark}

\begin{remark} \label{rmk:convergence}
\blue{Since $\text{range}(Q_{\mu}) \subset \myT$ and $\dim \myT < \infty$, we can exploit} the connection between Tikhonov regularisations and generalized inverses~\cite{Groetsch_1977} \blue{to} obtain that for $\lambda \rightarrow \infty$ the solution $\myu(\lambda,\mydata)$ converges to the generalized inverse $Q_{\mu}^+ (\mydata- \myProj{\myT}\myybk)$\blue{, which is the unique solution to
\begin{align*}
\min _{u \in \myU} \myNormU{u} \quad \text{s.t. } \quad u \in \text{arg} \min _{v \in \myU} \myNormY{Q_{\mu}v-\mydata+ \myProj{\myT}\myybk}.
\end{align*}
If the true state $\myytrue$ can be completely described by \eqref{eq:forward} for a model modification $u_0 = \myutrue$, and if in addition $\mydata = \myProj{\myT}\myytrue$ is an unbiased observation (e.g. from noise-free measurements), then $\myNormY{Q_{\mu}u-\mydata+ \myProj{\myT}\myybk} = 0$ if and only if $u \in \myutrue + \myU_0(\mu)$.
The 3D-VAR model correction $\myu$ then converges to $\myProj{\myU_0 ^{\perp}(\mu)} \myutrue$.
}
\end{remark}

%
\subsection{Saddle-Point Formulation}
\label{sec:tSaddlePoint}
We next recast the 3D-VAR minimization \eqref{prob:truth} as a saddle-point problem. Although we already know that the minimization is well-posed as a Tikhonov regularisation for any fixed parameter $\mu$, the saddle-point structure has several benefits:
First, the analytic properties known for saddle-point problems --- such as Brezzi's theorem ---  allow us to quantify the structural influences of the original best-knowledge model \eqref{eq:bk}, the model perturbation $\myb$, and the measurement space $\myT$ on the amplification of errors in $\mydata$ and $\myfbk$. Second, the analysis shows that the 3D-VAR problem is uniformly well-posed over the parameter domain, enabling the use of the method in parameter estimation. Third, in the discretized setting we obtain a linear system with saddle-point structure for which various numerical methods exist.

We note that for the solution $(\myu,\myy,\myd) \in \myU \times \myY \times \myT$ of \eqref{prob:truth}, the experimentally-observable mistfit, $\myd = \mydata-\myProj{\myT}\myy$, between the 3D-VAR solution $\myy$ and the observed state $\mydata$ can be be computed {\it a posteriori} from $\myu$ and $\myy$. We therefore omit the explicit dependence on $\myd$ in the saddle point formulation which results not only in a smaller system but also improved stability constants. For an approach which explicitly includes $\myd$, we refer to
\cite{Masterarbeit_Nicole}.

We next define the Hilbert space $\myHeins := \myU \times \myY$ with induced inner product $\mySP[\myHeins]{(u,y)}{(\phi,\psi)} := \mySPU{u}{\phi} + \mySPY{y}{\psi}$ and induced norm $\myNorm[\myHeins]{\cdot} := \sqrt{\mySP[\myHeins]{\cdot}{\cdot}}$.
Furthermore, we define the bilinear and linear forms
\begin{equation}\label{eq:defABG}
\begin{aligned}
A : \myHeins \times \myHeins \rightarrow \mathbb{R}: && A((u,y),(\phi,\psi)) & := \mySPU{u}{\phi} + \lambda \mySPY{\myProj{\myT}y}{\myProj{\myT}\psi} \\
\myB : \myHeins \times \myHzwei \rightarrow \mathbb{R}: && \myB((u,y),\psi) & := \mya(y,\psi) - \myb(u,\psi) \\
F : \myHeins \rightarrow \mathbb{R}: && F ((\phi,\psi)) &:= \lambda\mySPY{\mydata}{\psi} \\
\myG : \myHzwei \rightarrow \mathbb{R}: && \myG (\psi) &:= \myfbk(\psi).\\
\end{aligned}
\end{equation}
Note that $F$, $\myG$ are linear with $\myNorm[\myHeins']{F} = \lambda\myNormY{\mydata}$ and $\myNorm[\myHzwei']{\myG} = \myNorm[\myY']{\myfbk}$.
Furthermore, $A$ is symmetric and positive semi-definite, and both $A$ and $\myB$ are bilinear with bounded continuity constants
\begin{equation}\label{eq:def:gammaAB}
\begin{aligned}
\mygammaA &~:=~ \sup_{h \in \myHeins} \sup_{k \in \myHeins} \frac{A(h,k)}{\myNorm[\myHeins]{h} \myNorm[\myHeins]{k}} ~=~ \max\{1,\lambda\} \\
\mygammaB &~:=~ \sup_{h \in \myHeins} \sup_{y \in \myHzwei} \frac{\myB(h,y)}{\myNorm[\myHeins]{h} \myNormY{y}} ~\le~ \sqrt{\mygammaa^2 + \mygammab^2}. \\
\end{aligned}
\end{equation}
With the definitions in \eqref{eq:defABG}, the 3D-VAR minimization \eqref{prob:truth} is equivalent to
\begin{align*}
\min _{h \in \myHeins} A(h,h) - F(h) \text{ such that } \myB(h,\psi) = \myG(\psi) \text{ for all } \psi \in \myY.
\end{align*}
Employing a Lagrangian approach, we obtain the associated necessary, and in our setting sufficient, first-order optimality conditions~\cite{Hinze_book_optimizationPDE,Reyes_book}: Given $\mu \in \mathcal{C}$ find $(\myheins,\myhzwei)=((\myu,\myy),\myhzwei) \in \myHeins \times \myHzwei$ such that
\begin{equation}\label{prob:truth:sp}
\begin{aligned}
A(\myheins,k) + \myB(k,\myhzwei) &= F(k) && \forall~k \in \myHeins,  \\
\myB(\myheins,\psi) &= \myG(\psi) && \forall~\psi \in \myY.
\end{aligned}
\end{equation}

%
\subsection{Stability Analysis}
\label{sec:tStability}

The {\it a priori} stability of the 3D-Var formulation is closely linked to the choice of the measurement space $\myT$. The following stability analysis thus provides a criterion for choosing the measurement space $\myT$ in section \ref{sec:greedyOMP}. Furthermore, it allows us to link the properties of the modified model \eqref{eq:forward} and the choice of the measurement space $\myT$ to the amplification of noise in the 3D-Var solution. We start with the Ladyzhenskaya-Babu\v{s}ka-Brezzi (LBB) condition\blue{, for which we prove the lower bound  $\mybetaLB(\mu) := \myalpha$. In the sequel, the superscripts LB and UB shall refer to lower and upper bounds, respectively.}
\begin{theorem} \label{thm:tInfSup}
\begin{align*}
\mybeta ~:=~ \inf \limits_{y \in \myY  } \sup \limits_{h \in \myHeins  } \frac{\myB(h,y)}{\myNorm[\myHeins]{h}\myNormY{y}} ~\ge~ \mybetaLB(\mu) ~:=~\myalpha ~\ge~ \underline{\alpha}_a ~>~ 0.
\end{align*}
\end{theorem}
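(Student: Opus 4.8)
The plan is to establish the lower bound directly, by exhibiting for each fixed test direction $\psi \in \myY$ a single element $h \in \myHeins$ whose associated quotient $\myB(h,\psi)/(\myNorm[\myHeins]{h}\,\myNormY{\psi})$ already dominates $\myalpha$. Since the supremum over all $h \in \myHeins$ can only be larger, and the infimum over $\psi$ then bounds $\mybeta$ from below, this will immediately give the claim. The construction exploits the product structure of $\myHeins = \myU \times \myY$: because the first component of $h$ is a free variable that does not appear in the test space, I can choose it to annihilate the perturbation form $\myb$ and reduce everything to the coercivity of $\mya$.

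Concretely, recall $\myB((u,y),\psi) = \mya(y,\psi) - \myb(u,\psi)$ and the induced norm $\myNorm[\myHeins]{(u,y)} = \sqrt{\myNormU{u}^2 + \myNormY{y}^2}$. The key step is to select the particular test element $h = (0,\psi)$, i.e.\ to set the model-correction component to zero. By bilinearity $\myb(0,\cdot) = 0$, so the perturbation term drops out and $\myB((0,\psi),\psi) = \mya(\psi,\psi)$; simultaneously $\myNorm[\myHeins]{(0,\psi)} = \myNormY{\psi}$, so the extra $\myU$-factor in the denominator incurs no penalty. With this choice the quotient becomes exactly $\mya(\psi,\psi)/\myNormY{\psi}^2$, which by the uniform coercivity assumption~\eqref{eq:def:alpha} is bounded below by $\myalpha \ge \underline{\alpha}_a > 0$. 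Since the supremum over $h$ dominates the value at $h=(0,\psi)$, I obtain
\[
\sup_{h \in \myHeins} \frac{\myB(h,\psi)}{\myNorm[\myHeins]{h}\,\myNormY{\psi}} ~\ge~ \frac{\mya(\psi,\psi)}{\myNormY{\psi}^2} ~\ge~ \myalpha
\]
for every $\psi \in \myY$, and taking the infimum over $\psi$ yields $\mybeta \ge \myalpha$, hence $\mybetaLB(\mu) = \myalpha \ge \underline{\alpha}_a > 0$.

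I do not expect a genuine obstacle here; the only real content is recognizing that the correct test element deliberately discards the model-correction component $u$, so that the inf-sup condition collapses to the diagonal coercivity of $\mya$. The mild subtlety worth noting is that this bound is therefore completely independent of both the measurement space $\myT$ and the perturbation form $\myb$ — precisely the parameter-uniform, $\myT$-independent stability that the paper advertises. One could ask whether a cleverer choice of $h$ (mixing a nonzero $u$-component) sharpens the constant, but since only a lower bound is required, the simple selection suffices and no sharpness is claimed.
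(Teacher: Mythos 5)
Your proof is correct and is essentially identical to the paper's: both choose the test element $h=(0,y)$ so that the perturbation term $\myb$ vanishes, reducing the inf-sup bound to the coercivity of $\mya$. No discrepancies to report.
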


\begin{proof}
For any $y \in \myY\setminus \{0\}$, we obtain
\begin{align*}
\sup \limits_{h \in \myHeins  } \frac{\myB(h,y)}{\myNorm[\myHeins]{h}\myNormY{y}}
= \sup \limits_{(\phi,\psi) \in \myHeins  } \frac{\mya(\psi,y)-\myb(\phi,y)}{\myNorm[\myHeins]{(\phi,\psi)}\myNormY{y}}
\ge \frac{\mya(y,y)}{\myNormY{y}^2} \ge \myalpha,
\end{align*}
where we have chosen $(\phi,\psi) = (0,y)$ and inserted the coercivity condition \eqref{eq:def:alpha}.
\qed
\end{proof}
We now turn to verifying the coercivity of $A$ on the nullspace of $\myB$, given by
\begin{equation}\label{def:HeinsNull}
\begin{aligned}
\myHeins^0(\mu) := ~&\{~h \in \myHeins:~\myB(h,\psi) = 0 \quad \forall~\psi \in \myY~\} \\
= ~&\{~(u,y) \in \myU \times \myY:~\mya(y,\psi) = \myb(u,\psi) \quad \forall~ \psi \in \myY ~\} \subset \myHeins.
\end{aligned}
\end{equation}
Note that $\myHeins^0(\mu)$ is a closed subspace of $\myHeins$. For a concise notation, we introduce
\begin{equation}\label{eq:def:Ymu}
\begin{aligned}
\myYmu := ~&\{~y \in \myY:~\exists u \in \myU \text{ s.t. } (u,y) \in \myHeins^0(\mu)~\} \\
= ~&\{~y \in \myY:~\exists u \in \myU \text{ s.t. } \mya(y,\psi) = \myb(u,\psi) \quad \forall~ \psi \in \myY~\}
\end{aligned}
\end{equation}
as the space of all states in $\myHeins^0(\mu)$. Note that $\myYmu \neq \{0\}$ since $\mygammab > 0$ by \eqref{eq:uniformBound:ab}.

With $A((0,\psi),(0,\psi))= 0$ for all $\psi \in \myT ^{\perp}$, $A$ is in general not $\myHeins$-coercive. For the application of Brezzi's Theorem \cite{brezzi}, it is, however, already sufficient for $A$ to be $\myHeins^0(\mu)$-coercive. This property follows from the coercivity of $\mya$ as shown in the following theorem.
\begin{theorem}\label{thm:tCoercivity}
For $\mu \in \mathcal{C}$, let $\mydelta$ be the coercivity constant of $A$ on $\myHeins^0(\mu)$, i.e.
\begin{align} \label{eq:def:delta}
\mydelta ~:=~ \inf _{h \in \myHeins^0(\mu)} \frac{A(h,h)}{~\myNorm[\myHeins]{h}^2}.
\end{align}
Define the ratios
\begin{align}\label{eq:def:eta}
\myetainf := \inf _{(u,y) \in \myHeins^0(\mu)} \frac{\myNormY{y}}{\myNormU{u}} \ge 0
\quad \text{and} \quad
\myetasup := \sup _{(u,y) \in \myHeins^0(\mu)} \frac{\myNormY{y}}{\myNormU{u}} > 0,
\end{align}
and the inf-sup constant
\begin{align} \label{eq:def:kappa}
\mykappa := \inf _{y \in \myYmu}~ \sup _{\tau \in \myT}~ \frac{\mySPY{y}{\tau}}{\myNormY{y}\myNormY{\tau}} = \inf _{y \in \myYmu} \frac{\myNormY{\myProj{\myT}y}}{\myNormY{y}}~\ge~ 0.
\end{align}
Then
\begin{equation}\label{eq:def:deltaLB}
\mydeltaLB(\lambda,\myetainf, \myetasup, \mykappa) :=
\left\{
\begin{array}{ll}
\frac{1 + \lambda \mykappa^2 \myetasup^2 }{1 + \myetasup^2}, & \qquad \text{if } \lambda \mykappa^2 \le 1 \\
\frac{1 + \lambda \mykappa^2 \myetainf^2 }{1 + \myetainf^2}, & \qquad \text{otherwise}
\end{array}
\right.
\end{equation}
is a positive lower bound  to the $\myHeins^0(\mu)$-coercivity constant of $A$, i.e.
\begin{align}\label{eq:bound:deltaLB}
\mydelta
~\ge~ \mydeltaLB(\lambda, \myetainf, \myetasup, \mykappa) ~\ge~ \frac{1}{1+\myetasup^2} ~>~ 0.
\end{align}
\end{theorem}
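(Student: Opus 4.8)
The plan is to reduce the evaluation of the $\myHeins^0(\mu)$-coercivity constant to a one-dimensional minimization of an explicit scalar function, and then to read off the branches of \eqref{eq:def:deltaLB} as the minimum of that function over the admissible range of the ratio $\myNormY{y}/\myNormU{u}$. First I would fix an arbitrary nonzero $h = (u,y) \in \myHeins^0(\mu)$. Since $\mya$ is coercive by \eqref{eq:def:alpha}, the defining relation $\mya(y,\psi) = \myb(u,\psi)$ forces $y = 0$ whenever $u = 0$; hence $u \neq 0$ for every nonzero element of $\myHeins^0(\mu)$, the quantity $t := \myNormY{y}/\myNormU{u}$ is well defined, and by \eqref{eq:def:eta} it satisfies $t \in [\myetainf, \myetasup]$. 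Using $A(h,h) = \myNormU{u}^2 + \lambda \myNormY{\myProj{\myT}y}^2$ and $\myNorm[\myHeins]{h}^2 = \myNormU{u}^2 + \myNormY{y}^2$ and dividing through by $\myNormU{u}^2$, the Rayleigh quotient becomes $(1 + \lambda \myNormY{\myProj{\myT}y}^2/\myNormU{u}^2)/(1 + t^2)$.

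Next I would insert the two structural estimates. Because $(u,y) \in \myHeins^0(\mu)$ implies $y \in \myYmu$ by \eqref{eq:def:Ymu}, the inf-sup constant \eqref{eq:def:kappa} gives $\myNormY{\myProj{\myT}y} \ge \mykappa \myNormY{y}$, hence $\myNormY{\myProj{\myT}y}^2/\myNormU{u}^2 \ge \mykappa^2 t^2$. Substituting this yields, for every nonzero $h$,
\[
\frac{A(h,h)}{\myNorm[\myHeins]{h}^2} ~\ge~ f(t) := \frac{1 + \lambda \mykappa^2 t^2}{1 + t^2}, \qquad t \in [\myetainf, \myetasup].
\]
Since this holds elementwise with $t = t(h)$ ranging over $[\myetainf, \myetasup]$, the infimum defining $\mydelta$ is bounded below by $\inf_{t \in [\myetainf,\myetasup]} f(t)$. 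The crucial point is that the inf-sup bound and the ratio constraint decouple cleanly, so no joint optimization over the two components of $h$ is required.

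It then remains to minimize the scalar function $f$. Writing $s = t^2$, one computes $\frac{d}{ds}\frac{1+\lambda\mykappa^2 s}{1+s} = (\lambda \mykappa^2 - 1)/(1+s)^2$, whose sign is governed entirely by $\lambda \mykappa^2 - 1$. Thus $f$ is nonincreasing in $t$ when $\lambda \mykappa^2 \le 1$, so its minimum over $[\myetainf,\myetasup]$ is attained at $t = \myetasup$, while $f$ is nondecreasing when $\lambda \mykappa^2 > 1$, so the minimum is attained at $t = \myetainf$. Reading off the two endpoint values reproduces exactly the branches of $\mydeltaLB(\lambda,\myetainf,\myetasup,\mykappa)$, establishing the first inequality in \eqref{eq:bound:deltaLB}.

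Finally, for the clean lower bound $\frac{1}{1+\myetasup^2}$ I would treat the two branches separately: when $\lambda\mykappa^2 \le 1$ one simply drops the nonnegative numerator term $\lambda\mykappa^2\myetasup^2$, whereas when $\lambda\mykappa^2 > 1$ the numerator $1 + \lambda\mykappa^2\myetainf^2$ dominates $1 + \myetainf^2$, so that branch is at least $1 \ge \frac{1}{1+\myetasup^2}$. Strict positivity requires $\myetasup < \infty$, which I would verify by testing $\mya(y,y) = \myb(u,y)$ with coercivity \eqref{eq:def:alpha} and the bound $\mygammab \le \overline{\gamma}_b$ from \eqref{eq:uniformBound:ab} to obtain $\myetasup \le \overline{\gamma}_b/\underline{\alpha}_a$. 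The monotonicity argument is routine once the reduction to $f(t)$ is in place; the only real bookkeeping is at the degenerate configurations ($y = 0$, hence $\myetainf = 0$) and at the crossover $\lambda\mykappa^2 = 1$, where both branches coincide and equal $1$.
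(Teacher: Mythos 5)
Your proof is correct and arrives at exactly the constant $\mydeltaLB(\lambda,\myetainf,\myetasup,\mykappa)$ of the theorem, but it finishes the optimization differently from the paper. Both arguments share the same skeleton: restrict to nonzero $h=(u,y)\in\myHeins^0(\mu)$, invoke $\myetainf\myNormU{u}\le\myNormY{y}\le\myetasup\myNormU{u}$ and $\myNormY{\myProj{\myT}y}\ge\mykappa\,\myNormY{y}$, and split on the sign of $\lambda\mykappa^2-1$. The paper then lower-bounds $A(h,h)$ by redistributing a weight $x$ between the $\myNormU{u}^2$- and $\myNormY{y}^2$-terms and maximizing $\min\{1-x\myetasup^2,\;\lambda\mykappa^2+x\}$ over $x$; you instead divide through by $\myNormU{u}^2$ (legitimate, since coercivity of $\mya$ forces $u\neq 0$ for every nonzero element of $\myHeins^0(\mu)$), reduce to the scalar function $f(t)=(1+\lambda\mykappa^2t^2)/(1+t^2)$ on $[\myetainf,\myetasup]$, and read off its minimum from monotonicity in $t^2$. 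The two computations are equivalent --- the paper's optimal $x$ is precisely the value that equalizes its two coefficients at $f(\myetasup)$ --- but your version is more transparent: it makes evident why the bound switches between the endpoints $\myetasup$ and $\myetainf$ at the crossover $\lambda\mykappa^2=1$, it absorbs the paper's separate treatment of the degenerate case $y=0$ (where $f(0)=1$ recovers the branch value when $\myetainf=0$), and your explicit check $\myetasup\le\overline{\gamma}_b/\underline{\alpha}_a<\infty$ supplies a finiteness detail the paper leaves implicit. The concluding chain $\mydeltaLB(\lambda,\myetainf,\myetasup,\mykappa)\ge 1/(1+\myetasup^2)>0$ is handled the same way in both.
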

\begin{proof}
Since $\mya$ is coercive and $\mygammab > 0$ by assumption \eqref{eq:uniformBound:ab}, there exists at least one element $(u,y) \in \myHeins^0(\mu)$ with $y \neq 0$;
hence $\myetasup > 0$.
Let $0 \neq h = (u,y) \in \myHeins^0(\mu)$ be arbitrary.
We note that $y \in \myYmu$.
With the definitions \eqref{eq:def:eta} and \eqref{eq:def:kappa}, we obtain $\myetainf \myNormU{u} \le \myNormY{y} \le \myetasup \myNormU{u}$ and $\myNormY{\myProj{\myT}y} \ge \mykappa \myNormY{y}$ respectively.

If $\lambda \mykappa^2 \le 1$, then
\begin{align*}
A(h,h) &= \myNormU{u}^2 + \lambda \myNormY{\myProj{\myT}y}^2 \\
&\ge \myNormU{u}^2 + \lambda \mykappa^2 \myNormY{y}^2 \\
&\ge (1-x\myetasup^2) ~ \myNormU{u}^2 + (\lambda \mykappa^2 + x)~ \myNormY{y}^2 \\
&\ge \min \{1-x\myetasup^2,~ \lambda \mykappa^2+x\} ~ \myNorm[\myHeins]{h}^2
\end{align*}
for arbitrary $x \in [0,\myetasup^{-2}]$.
The minimum is maximal when both terms equal, which is the case for $x = \frac{1 - \lambda \mykappa^2}{1 + \myetasup^2} < \myetasup^{-2}$.
The minimum then takes the value $\mydeltaLB(\lambda,\myetainf, \myetasup, \mykappa)$.

Now suppose $\lambda \mykappa^2 > 1$.
If $y = 0$, then $\myetainf = 0$ and $A(h,h) =\myNormU{u}^2 = \myNorm[\myHeins]{h}^2$.
For $y \neq 0$, we obtain for any $x \in [0, \lambda \mykappa^2]$:
\begin{align*}
A(h,h) &= \myNormU{u}^2 + \lambda \myNormY{\myProj{\myT}y}^2 \\
&\ge \myNormU{u}^2 + \lambda \mykappa^2 \myNormY{y}^2 \\
&\ge (1+x\myetainf^2) ~ \myNormU{u}^2 + (\lambda \mykappa^2 -x )~ \myNormY{y}^2 \\
&\ge \min \{1+x\myetainf^2,~ \lambda \mykappa^2 - x\} \myNorm[\myHeins]{h}^2.
\end{align*}
With $x = \frac{\lambda \mykappa^2-1}{1 + \myetainf^2}$ we obtain the second part of $\mydeltaLB$.
\qed
\end{proof}

We briefly comment on the meaning of some of the constants. While the inf-sup constant $\mykappa$ relates to the experimental design,  the ratios $\myetainf$ and $\myetasup$ relate to the modelling process. The former depends on the choice of measurements, i.e., the measurement space $\myT$ and reflects how well model modifications can be distinguished based on changes in $\myT$. The latter ratios $\myetainf$ and $\myetasup$ reflect how much change in the state can minimally and maximally be evoked by a model modification in $\myU$, respectively. As already mentioned in the proof of Theorem \ref{thm:tCoercivity}, $\myetasup > 0$ due to $\mygammab > 0$. Note that if $\myetasup = 0$ would hold, $\myU$ would have no effect on the best-knowledge model \eqref{eq:bk}. The ratio $\myetainf$ equals zero if there exists a superfluous search direction $u \in \myU$ whose physical influence $\myb(u,\cdot)$ is not sufficiently captured by $\myY$. To express this correlation, we define, for nontrivial spaces $\mathcal{V} \subset \myU$, $\mathcal{W} \subset \myY$, the inf-sup constant
\begin{align}\label{eq:def:infsup:b}
\beta_b(\mathcal{V},\mathcal{W}) := \inf _{u \in \mathcal{V}} \sup _{w \in \mathcal{W}} \frac{\myb(u,w)}{\myNormU{u} \myNormY{w}} \ge 0.
\end{align}
We can then bound $\myetainf$ by $\frac{\beta_b(\myU,\myYmu)}{\mygammaa} \le \myetainf \le \frac{\beta_b(\myU,\myYmu)}{\myalpha}$. The inf-sup constant $\beta_b(\myU,\myYmu)$ thus provides information on the quality of the model modifications for any parameter $\mu \in \mathcal{C}$ and may prove useful in the design of possible model modifications.

In order to quantify the influence of the noise on the 3D-Var solution, we first present a result concerning the behavior of $\mydelta$.
\blue{
To describe asymptotic behavior in $\lambda$, we make use of the Landau symbol $\Theta$ (see, e.g., \cite{cormen2009introduction}):
A function $f$ lies in the class $\Theta(g)$ if $g$ is an asymptotically tight bound, i.e. there exist $c_1, c_2 > 0$ and a $\lambda_0 \ge 0$ such that $0 \le c_1 g(\lambda) \le f(\lambda) \le c_2 g(\lambda)$ for all $\lambda \ge \lambda_0$.
}

\begin{theorem} \label{thm:equiv}
For $\mu \in \mathcal{C}$, either $\mydelta \le 1$ for all $\lambda > 0$ or $\mydelta \in \Theta(\lambda)$ \blue{with $\lambda_0 > 0$.}
The latter is equivalent to both $\myetainf > 0$ and $\mykappa > 0$ being satisfied.
\end{theorem}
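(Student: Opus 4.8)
The plan is to collapse the whole statement onto a single scalar,
\[
\rho^* \;:=\; \inf_{0 \neq h = (u,y) \in \myHeins^0(\mu)} \frac{\myNormY{\myProj{\myT}y}^2}{\myNorm[\myHeins]{h}^2} \;\ge\; 0,
\]
and to show that the dichotomy is governed precisely by whether $\rho^* = 0$ or $\rho^* > 0$. First I would rewrite the Rayleigh quotient defining $\mydelta$ by splitting $A(h,h) = \myNormU{u}^2 + \lambda\myNormY{\myProj{\myT}y}^2$ over $\myNorm[\myHeins]{h}^2 = \myNormU{u}^2 + \myNormY{y}^2$. Since the resulting first summand $\myNormU{u}^2/\myNorm[\myHeins]{h}^2$ always lies in $[0,1]$ and the second is nonnegative, taking the infimum over $h \in \myHeins^0(\mu)$ gives the two-sided estimate
\[
\lambda\,\rho^* \;\le\; \mydelta \;\le\; 1 + \lambda\,\rho^*.
\]

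From this sandwich the dichotomy is immediate. If $\rho^* = 0$ then $\mydelta \le 1$ for every $\lambda > 0$; if $\rho^* > 0$ then $\rho^*\lambda \le \mydelta \le (\rho^* + \lambda_0^{-1})\lambda$ for all $\lambda \ge \lambda_0$, i.e. $\mydelta \in \Theta(\lambda)$ for any $\lambda_0 > 0$. I would remark that the restriction $\lambda_0 > 0$ is genuinely needed, since $\mydelta$ evaluated at $\lambda = 0$ equals $\inf_h \myNormU{u}^2/\myNorm[\myHeins]{h}^2 = (1+\myetasup^2)^{-1} > 0$, whereas the tight-bound function $g(\lambda)=\lambda$ vanishes there. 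The two cases are mutually exclusive, since membership in $\Theta(\lambda)$ forces $\mydelta \to \infty$.

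It then remains to prove the equivalence $\rho^* > 0 \iff (\myetainf > 0 \text{ and } \mykappa > 0)$, which is the heart of the argument. I would first note that for $0 \neq h = (u,y) \in \myHeins^0(\mu)$ one has $u \neq 0$ (otherwise $\mya(y,\cdot)=0$ and coercivity \eqref{eq:def:alpha} forces $y = 0$), so the relevant ratios are well defined and $y \in \myYmu$. For the implication ``$\Leftarrow$'' I would chain the defining inequalities $\myNormY{\myProj{\myT}y} \ge \mykappa\,\myNormY{y}$ (from \eqref{eq:def:kappa}) and $\myNormY{y} \ge \myetainf\,\myNormU{u}$ (from \eqref{eq:def:eta}) to get
\[
\frac{\myNormY{\myProj{\myT}y}^2}{\myNorm[\myHeins]{h}^2} \;\ge\; \mykappa^2\,\frac{\myNormY{y}^2}{\myNormU{u}^2 + \myNormY{y}^2} \;\ge\; \frac{\mykappa^2\,\myetainf^2}{1 + \myetainf^2} \;>\; 0,
\]
whence $\rho^* > 0$. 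For ``$\Rightarrow$'' I would argue by contraposition via minimizing sequences: if $\mykappa = 0$, take $y_n \in \myYmu$ with $\myNormY{\myProj{\myT}y_n}/\myNormY{y_n} \to 0$ and corresponding $u_n$ with $(u_n,y_n) \in \myHeins^0(\mu)$, so that the quotient is $\le \myNormY{\myProj{\myT}y_n}^2/\myNormY{y_n}^2 \to 0$; if instead $\myetainf = 0$, normalize $\myNormU{u_n} = 1$ along a sequence with $\myNormY{y_n} \to 0$ and use $\myNormY{\myProj{\myT}y_n} \le \myNormY{y_n}$ to bound the quotient by $\myNormY{y_n}^2 \to 0$. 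In either case $\rho^* = 0$.

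The only delicate point, and the part I would be most careful about, is that $\myHeins^0(\mu)$ and $\myYmu$ may be infinite-dimensional, so $\myetainf$ and $\mykappa$ need not be attained; the proof therefore never invokes minimizers and works throughout with the defining infima and the minimizing sequences above, which is harmless since every step is an inequality or a limit. As a consistency check I would observe that the lower half of the growth claim can alternatively be read directly from the constant $\mydeltaLB$ of Theorem~\ref{thm:tCoercivity}, which in the regime $\lambda\mykappa^2 > 1$ equals $\frac{1 + \lambda\mykappa^2\myetainf^2}{1 + \myetainf^2}$ and hence lies in $\Theta(\lambda)$ exactly when $\mykappa, \myetainf > 0$.
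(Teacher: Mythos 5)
Your proof is correct. The key difference from the paper's argument is organizational: the paper sandwiches $\mydelta$ between the lower bound $\mydeltaLB(\lambda,\myetainf,\myetasup,\mykappa)$ of Theorem~\ref{thm:tCoercivity} and the continuity constant $\mygammaA=\max\{1,\lambda\}$, observes that $\mydeltaLB\in\Theta(\lambda)$ exactly when $\myetainf>0$ and $\mykappa>0$, and then completes the dichotomy by exhibiting minimizing sequences showing $\mydelta\le 1$ when $\myetainf=0$ or $\mykappa=0$. You instead introduce the single scalar $\rho^*=\inf_h\myNormY{\myProj{\myT}y}^2/\myNorm[\myHeins]{h}^2$ and derive the self-contained sandwich $\lambda\rho^*\le\mydelta\le 1+\lambda\rho^*$, which reduces the entire theorem to the equivalence $\rho^*>0\iff(\myetainf>0\text{ and }\mykappa>0)$; this avoids any reliance on the case analysis behind $\mydeltaLB$ and makes the $\mykappa=0$ branch slightly cleaner (you get $\mydelta\le 1$ directly from $\rho^*=0$ rather than estimating the two terms of the Rayleigh quotient separately as the paper does). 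The substance is nevertheless the same: your ``$\Leftarrow$'' chain $\myNormY{\myProj{\myT}y}\ge\mykappa\myNormY{y}\ge\mykappa\myetainf\myNormU{u}$ reproduces exactly what $\mydeltaLB$ encodes in the regime $\lambda\mykappa^2>1$ (as you note yourself), and your contrapositive minimizing sequences are essentially those in the paper's proof. What your route buys is a tighter two-sided bound and independence from Theorem~\ref{thm:tCoercivity}; what it gives up is that $\rho^*$ is a new quantity not reused elsewhere, whereas the paper's argument doubles as a statement that the computable bound $\mydeltaLB$ is asymptotically tight, i.e. $\mydelta\in\Theta(\mydeltaLB)$, which the authors exploit in the subsequent discussion.
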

\begin{proof}
By Theorem \ref{thm:tCoercivity} and \eqref{eq:def:gammaAB},
\begin{align} \label{eq:proofThmEquiv:1}
0 < \frac{1}{1+\myetasup^2} \le \mydeltaLB(\lambda,\myetainf, \myetasup, \mykappa) \le \mydelta \le \mygammaA = \max\{1,\lambda\},
\end{align}
where $\mydeltaLB(\lambda,\myetainf, \myetasup, \mykappa) \in \Theta(\lambda)$ if and only if $\myetainf > 0$ and $\mykappa > 0$.
It is hence sufficient to show that $\myalpha \le 1$ for all $\lambda > 0$ when $\myetainf = 0$ or $\mykappa = 0$.
\blue{We note that $\lambda_0 > 0$ stems from the upper bound $\mydelta \le\max\{1,\lambda\}$ in \eqref{eq:proofThmEquiv:1}.}

We suppose first that $\myetainf = 0$. Let $\lambda > 0$ be fixed. With \eqref{eq:def:eta}, there exists a sequence $h_n = (u_n,y_n) \in \myHeins^0(\mu)$, $n \in \mathbb{N}$, such that $\myNormU{u_n} = 1$ and $\myNormY{y_n} \rightarrow 0$ for $n \rightarrow \infty$. Then $\mydelta \le \frac{A(h_n,h_n)}{~\myNorm[\myHeins]{h_n}^2} \le \frac{1 + \lambda \myNormY{y_n}^2}{1 + \myNormY{y_n}^2} \rightarrow 1$ for $n \rightarrow \infty$.

If $\mykappa = 0$, there exists $h_n = (u_n,y_n) \in \myHeins^0(\mu)$, $n \in \mathbb{N}$, with $\myNormY{y_n} = 1$ and, for $n \rightarrow \infty$, $\myNormY{\myProj{\myT}{y_n}} \rightarrow 0$. In particular $\myNormU{u_n}>0$ for all $n \in \mathbb{N}$. Then
\begin{align*}
\mydelta \le
\frac{A(h_n,h_n)}{\myNorm[\myHeins]{h_n}^2}
&= \frac{\myNormU{u_n}^2}{\myNormU{u_n}^2 + \myNormY{y_n}^2} + \lambda \frac{\myNormY{\myProj{\myT}y_n}^2}{\myNormU{u_n}^2 + \myNormY{y_n}^2},
\end{align*}
where the first term is bounded by $(1+\myetainf^2)^{-1} \le 1$ and the second converges to $0$ for $n \rightarrow \infty$ and any fixed $\lambda$.
\qed
\end{proof}

\blue{Theorem \ref{thm:equiv} shows that the lower bound from $\mydeltaLB(\lambda,\myetainf, \myetasup, \mykappa)$ under-predicts the true $\myHeins^0(\mu)$-coercivity constant $\mydelta$ only by a bounded multiplicative factor, i.e. $\mydelta \in \Theta(\mydeltaLB(\lambda,\myetainf, \myetasup, \mykappa))$. We thus gain insight on how the main behaviour of $\mydelta$ is influenced by the quantities $\myetainf$, $\myetasup$, and $\mykappa$. By optimizing these quantities in the modelling process and with the selection of measurements, we have a direct way of influencing $\mydelta$ and thereby the stability of the whole 3D-VAR system \eqref{prob:truth:sp}.
}

We next recall that $\mybeta$ and $\mydelta$ are uniformly bounded away from zero as shown in Theorems~\ref{thm:tInfSup} and \ref{thm:tCoercivity}, respectively. It then follows from Brezzi's Theorem that the 3D-VAR formulation is uniformly well-posed over the parameter domain for all source functions in $\myHeins'$ and $\myHzwei'$~\cite{brezzi,Brezzi_book}.
\blue{For the particular choice in \eqref{eq:defABG}, we get the following stability bounds from Theorem 5.2, p. 38, in \cite{Brezzi_book} after inserting $\myNorm[\myHeins']{F} = \lambda\myNormY{\mydata}$ and $\myNorm[\myHzwei']{\myG} = \myNorm[\myY']{\myfbk}$:}
\begin{theorem}\label{thm:tStability}
Problem \eqref{prob:truth:sp} has a unique solution $(\myheins,\myhzwei) \in \myHeins \times \myHzwei$, and
\begin{subequations}\label{eq:tStability}
\begin{align}
\myNorm[\myHeins]{\myheins} &~~\le~~ \frac{\lambda~\myNormY{\mydata}}{\mydelta} +\frac{\myNorm[\myY']{\myfbk}}{\mybeta}\bigg{(}\frac{\mygammaA}{\mydelta}+1\bigg{)}
\label{eq:tStability:h}\\
\myNorm[\myHzwei]{\myhzwei} &~~\le~~ \frac{\lambda~\myNormY{\mydata}}{\mybeta}\bigg{(}\frac{\mygammaA}{\mydelta}+1\bigg{)}~ +  \frac{\mygammaA~\myNorm[\myY']{\myfbk} }{\mybeta^2} \bigg{(}\frac{\mygammaA}{\mydelta}+1\bigg{)}
\label{eq:tStability:p}
\end{align}
\end{subequations}
\end{theorem}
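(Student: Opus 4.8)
The plan is to recognize \eqref{prob:truth:sp} as a standard mixed (saddle-point) variational problem of Brezzi type and to invoke the classical well-posedness and stability theory directly, since all the structural ingredients have already been assembled. First I would collect the facts established earlier: by \eqref{eq:def:gammaAB}, $A$ is bounded on $\myHeins$ with continuity constant $\mygammaA = \max\{1,\lambda\}$, and $\myB$ is bounded with constant $\mygammaB \le \sqrt{\mygammaa^2 + \mygammab^2}$; by Theorem \ref{thm:tInfSup}, $\myB$ satisfies the inf-sup (LBB) condition with $\mybeta \ge \myalpha > 0$; and by Theorem \ref{thm:tCoercivity}, although $A$ is merely positive semi-definite on all of $\myHeins$, it is coercive on the nullspace $\myHeins^0(\mu)$ of $\myB$ with constant $\mydelta \ge \mydeltaLB > 0$. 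These are exactly the four hypotheses required by the Babu\v{s}ka--Brezzi theory, so existence and uniqueness of $(\myheins,\myhzwei) \in \myHeins \times \myHzwei$ follow at once.

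For the quantitative estimates I would apply the standard Brezzi stability bounds (Theorem 5.2 in \cite{Brezzi_book}), which for a problem of the form \eqref{prob:truth:sp} read
\begin{align*}
\myNorm[\myHeins]{\myheins} &\le \frac{1}{\mydelta}\myNorm[\myHeins']{F} + \frac{1}{\mybeta}\Big(\frac{\mygammaA}{\mydelta} + 1\Big)\myNorm[\myHzwei']{\myG},\\
\myNorm[\myHzwei]{\myhzwei} &\le \frac{1}{\mybeta}\Big(\frac{\mygammaA}{\mydelta} + 1\Big)\myNorm[\myHeins']{F} + \frac{\mygammaA}{\mybeta^2}\Big(\frac{\mygammaA}{\mydelta} + 1\Big)\myNorm[\myHzwei']{\myG}.
\end{align*}
Substituting the dual norms $\myNorm[\myHeins']{F} = \lambda\myNormY{\mydata}$ and $\myNorm[\myHzwei']{\myG} = \myNorm[\myY']{\myfbk}$ recorded after \eqref{eq:defABG} then produces precisely \eqref{eq:tStability:h} and \eqref{eq:tStability:p}.

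If I preferred a self-contained derivation over a citation, I would reconstruct the bounds by the usual splitting argument: decompose $\myheins$ into a kernel component $\myheins^0 \in \myHeins^0(\mu)$ and a complement. The second equation of \eqref{prob:truth:sp} together with the inf-sup constant $\mybeta$ controls the complement by $\myNorm[\myY']{\myfbk}/\mybeta$; restricting the first equation to test functions in $\myHeins^0(\mu)$ and using $\mydelta$-coercivity then bounds $\myheins^0$ in terms of $\myNorm[\myHeins']{F}$, $\mygammaA$, and the already-estimated complement; finally $\myhzwei$ is recovered by testing the first equation against the inf-sup-realizing directions and dividing by $\mybeta$. This is entirely routine once the two constants $\mybeta$ and $\mydelta$ are in hand.

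The single point demanding care — and the one I would flag as the crux — is that $A$ is \emph{not} coercive on the whole space $\myHeins$ (indeed $A((0,\psi),(0,\psi)) = 0$ for every $\psi \in \myT^{\perp}$, as noted before Theorem \ref{thm:tCoercivity}), so the kernel-coercivity constant $\mydelta$, rather than any full-space constant, must be used consistently in both estimates. The Brezzi framework is tailored exactly to this degenerate situation, which is why invoking it — instead of attempting a naive Lax--Milgram argument on $A$ — is essential. Beyond Theorems \ref{thm:tInfSup} and \ref{thm:tCoercivity}, no genuinely new estimate is needed.
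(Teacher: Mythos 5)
Your proposal is correct and follows exactly the paper's route: the paper likewise derives Theorem \ref{thm:tStability} by verifying the Brezzi hypotheses via Theorems \ref{thm:tInfSup} and \ref{thm:tCoercivity} together with the continuity constants in \eqref{eq:def:gammaAB}, then cites Theorem 5.2 of \cite{Brezzi_book} and inserts $\myNorm[\myHeins']{F} = \lambda\myNormY{\mydata}$ and $\myNorm[\myHzwei']{\myG} = \myNorm[\myY']{\myfbk}$. Your additional remarks on the kernel-coercivity being the essential ingredient are consistent with the paper's own discussion preceding Theorem \ref{thm:tCoercivity}.
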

We note that the effect of experimental noise on the solution is governed by the stability coefficients in front of $\myNormY{\mydata}$. Since $\mygammaA = \max\{1,\lambda\}$, they scale like $\Theta(\lambda / \mydelta)$ in \eqref{eq:tStability:h} and $\Theta(\lambda^2 / \mydelta)$ in \eqref{eq:tStability:p} for $\lambda \ge 1$. Furthermore, we know from Theorem~\ref{thm:equiv} that $\mydelta$ is of order $\Theta(\lambda)$ for $\lambda \rightarrow \infty$ if and only if $\myetainf > 0$ and $\mykappa > 0$. It thus follows that the stability coefficients for the model modification and state variable $\myheins$ are bounded from above independently of $\lambda$ in this case, and that the stability coefficients for the adjoint $\myhzwei$ scales like $\Theta(\lambda)$. If $\myetainf = 0$ or $\mykappa = 0$, the scaling is significantly worse with $\Theta(\lambda)$ and $\Theta(\lambda^2)$, respectively. Provided $\myetainf > 0$, we should thus choose $\myT$ such that the inf-sup constant $\mykappa$ is maximized. We discuss such a stability-based generation of the measurement space in section \ref{sec:greedyOMP}.

\begin{remark}
We briefly comment on the link between the inf-sup constant $\mykappa$, the PBDW formulation, and the 3D-VAR solution.
Suppose $\myYmu$ is closed, as is the case if $\dim \myU < \infty$, and suppose further that $\mykappa > 0$.
Let $\myybk \in \myY$ be the solution of the best-knowledge problem \eqref{eq:bk}.
Then the minimization
\begin{align*}
\min _{y \in \myYmu, d \in \myT} \myNormY{d}^2 \text{ such that } \mySPY{y + d}{\tau} = \mySPY{\mydata - \myybk}{\tau} ~\forall ~\tau \in \myT
\end{align*}
has a unique, $\mu$-dependent solution $(\myyPBDWhat,\myetaPBDW) \in \myYmu \times \myT$.
This problem is the PBDW formulation \cite{PBDW} over the best-knowledge space $\myYmu$, where the basis functions of $\myU$ act as parameters.
For an extensive analysis of state estimation with the PBDW approach, we refer to the literature \cite{PBDW,PBDW2,doi:10.1137/15M1025384,2017arXiv171209594M}.

\blue{
With this notation and the discussion in Remark \ref{rmk:convergence}, the model correction $\myu = \myu(\lambda)$ hence converges for $\lambda \rightarrow \infty$ to the unique element $\myuPBDW$ with minimal norm under all $u \in \myU$ with $(u,\myyPBDWhat) \in \myHeins^0(\mu)$.
}
If the inf-sup condition $\beta_b(\myU,\myYmu) > 0$ holds, \blue{or equivalently} $\myetainf > 0$, \blue{then the limit} $\myuPBDW$ is already uniquely defined by the property $(\myuPBDW,\myyPBDW) \in \myHeins^0(\mu)$.
In either case, we have $\myy = \myy(\lambda) \rightarrow \myyPBDW := \myybk + \myyPBDWhat$ and $\myd(\lambda) = \mydata - \myProj{\myT}\myy(\lambda) \rightarrow \myetaPBDW$ for $\lambda \rightarrow \infty$.

It has been shown in \cite{PBDW,doi:10.1137/15M1025384}, that if $\mydata = \myProj{\myT}\myytrue$, then
\begin{align*}
\myNormY{\myProj{\myYmu}(\myytrue-\myyPBDW)} &\le \frac{1}{\mykappa} \inf_{d \in \myT \cap \myYmu^{\perp}} \inf _{y \in \myybk+\myYmu} \myNormY{\myytrue-(y + d)} \\
\myNormY{\myytrue-(\myyPBDW + \myetaPBDW)} &\le \frac{1}{\mykappa} \inf_{d \in \myT \cap \myYmu^{\perp}} \inf _{y \in\myybk + \myYmu} \myNormY{\myytrue-(y + d)},
\end{align*}
and $\mykappa^{-1}$ is the optimal class performance for the second bound (see \cite{doi:10.1137/15M1025384}).
The approximation of $\myytrue$ through the 3D-VAR solution therefore also profits in the limit $\lambda \rightarrow \infty$ from a preferably large inf-sup constant $\mykappa > 0$.
\end{remark}

\begin{remark}
We note that the PBDW approach requires $\dim \myT \ge \dim \myU$ measurements of the true state for the problem to be well-posed. For the 3D-VAR method, on the other hand, the method remains well-posed for any measurement space --- allthough a ``good'' choice for $\myT$ may increase its stability. Additionally, the regularisation parameter $\lambda$ provides a direct control over the amplification of noise. By changing $\lambda$, the 3D-VAR method can be adjusted for different noise levels. A comparatively small $\lambda$-value may result in better state estimate than a large $\lambda$-value given noisy data.
\end{remark}

\subsection{Numerical Solution}
For the numerical solution of the 3D-VAR saddle-point system \eqref{prob:truth:sp}, the spaces $\myU$ and $\myY$ are replaced with finite-dimensional approximation spaces, e.g. conforming finite element spaces. Let $(\phi_m)_{m=1}^{\myM} \subset \myU$ and $(\psi_n)_{n=1}^{\myN} \subset \myY$ be bases of $\myU$ and $\myY$, respectively. For $\myT$ we consider the basis $(\tau_l)_{l=1}^L$ which can be represented in terms of the basis of $\myY$.

The 3D-VAR solution $(\myheins,\myhzwei) = (\myu,\myy,\myw) \in \myU \times \myY \times \myY$ is then uniquely defined by the basis coefficients of the model modification, state, and adjoint arguments represented by $\myu = \sum _{m = 1} ^{\myM} \myvecu_m \phi_m$, $\myy = \sum _{n = 1} ^{\myN} \myvecy_n \psi_n$, and $\myw = \sum _{n = 1} ^{\myN} \myvecw_n \psi_n$, respectively.
The coefficient vector $(\myvecu,\myvecy,\myvecw) \in \mathbb{R}^{\myM + 2\myN}$ is the unique solution of the linear $(\myM+2\myN)\times(\myM+2\myN)$ saddle-point system
\begin{equation} \label{eq:numImp:t}
\begin{aligned}
\left(
\begin{array}{ccc}
\textbf{U} & \textbf{0} & ~\textbf{B}^T(\mu)~ \\
\textbf{0} & \lambda\textbf{P} & \textbf{A}^T(\mu) \\
~\textbf{B}(\mu)~ & ~\textbf{A}(\mu)~ & \textbf{0}
\end{array}
\right)
\left(
\begin{array}{c}
\myvecu \\
\myvecy \\
\myvecw
\end{array}
\right)
=
\left(
\begin{array}{c}
\textbf{0}\\
\lambda \myvecdata \\
\myvecfbk(\mu)
\end{array}
\right).
\end{aligned}
\end{equation}
Here, $\mathbf{U} \in \mathbb{R}^{\myM \times \myM}$ is the mass matrix on $\myU$, $\mathbf{P} \in \mathbb{R}^{\myN \times \myN}$ is the matrix representation of the state projection $\myProj{\myT}$, $\mathbf{A}(\mu) \in \mathbb{R}^{\myN \times \myN}$ is the stiffness matrix, and $\mathbf{B}(\mu) \in \mathbb{R}^{\myN \times \myM}$ is the model modification matrix.
The entries are given by $\mathbf{U}_{i,j} = \mySPU{\phi_j}{\phi_i}$, $\mathbf{P}_{i,j} = \mySPY{\myProj{\myT} \psi_j}{\myProj{\myT} \psi_i}$, $\mathbf{A}_{i,j}(\mu) = \mya(\psi_j,\psi_i)$ and $\mathbf{B}_{i,j}(\mu) = -\myb(\phi_j,\psi_i)$.
On the right side of the equation, we have the best-knowledge source term vector $\myvecfbk(\mu) \in \mathbb{R}^{\myN}$ given by $\mathbf{f}_{{\rm{bk}},i}(\mu) = \myfbk(\psi_i)$, and the data vector $\myvecdata \in \mathbb{R}^{\myN}$ with $\mathbf{y} _{{\rm{d}},i} = \mySPY{\mydata}{\psi_i}$.
The latter can be obtained from the measurement data $\myvecmeasured$:
For $\mathbf{T} \in \mathbb{R}^{L \times L}$, $\mathbf{T}_{i,j} := g_i(\tau_j)$ and $\mathbf{S}\in \mathbb{R}^{\myN \times L}$, $\mathbf{S}_{i,j}:= \mySPY{\tau_j}{\psi_i}$, we have $\myvecdata = (\mathbf{S}\mathbf{T}^{-1}) \myvecmeasured$.

We note that the affine nature \eqref{eq:affine} of the bilinear and linear forms $\mya$, $\myb$ and $\myfbk$ transfers to the corresponding matrices $\mathbf{A}(\mu)$, $\mathbf{B}(\mu)$ and $\myvecfbk(\mu)$. Hence, for different parameters they can be assembled from precomputed, $\mu$-independent matrices, e.g. the stiffness matrix $\textbf{A}(\mu)$ has the form $\textbf{A}(\mu) = \sum _{\vartheta = 1} ^{Q_a} \theta_a^{\vartheta}(\mu) \textbf{A}^{\vartheta} \in \mathbb{R}^{\myN \times \myN} \text{ with } (\textbf{A}^{\vartheta})_{i,j} = a^{\vartheta}(\Psi_j,\Psi_i) \text{ for } \vartheta = 1,...,Q_a$.


\section{RB Approximation}
\label{sec:RB}
The stability analysis in section \ref{sec:tStability}, in particular Theorem \ref{thm:tStability}, indicates, that the 3D-VAR problem \eqref{prob:truth} is well-posed for every parameter $\mu \in \mathcal{C}$ with uniformly bounded stability constants. Yet, the implementation of the 3D-VAR method involves solving a large linear system of equations  for each parameter $\mu$, where the $\mu$-dependent parts of the block matrix need to be assembled anew whenever $\mu$ changes. For real-time and many-query applications over the parameter domain, the computational cost hence becomes large.

We address these problems by introducing an RB scheme: We first derive an RB formulation of the 3D-VAR method and employ the extensive stability analysis of section \ref{sec:tStability} to the RB spaces to emphasize the connection to the measurement space. We comment briefly on the numerical implementation of the RB method, before we provide an error analysis between the solutions of the original (truth) 3D-VAR problem \eqref{prob:truth} and its RB approximation. We derive computationally efficient a posteriori error bounds for the error in the model modification, state estimate, adjoint solution and the misfit between observations in the measurement space.

\blue{We briefly comment on the notation. We require several reduced basis quantities in this section which are direct analogues of their truth counterparts, e.g., of the inf-sup constant $\beta_{\cal T}(\mu)$. In order to avoid various repetitions, quantities with an indexed $()_{\cal{R}}$ are defined as before with the FE spaces replaced by the corresponding RB spaces.}

%
\subsection{RB Problem Statement}
\label{sec:rbStability}

The main contributions to the computational cost of the 3D-VAR method \eqref{prob:truth} come from the evaluation of the PDE over the state space $\myY$ in the direction of all possible model modifications $\myU$.
If the amount of model modifications is restricted to the most essential directions, and the solution of the PDE is approximated efficiently with a reduced order model, then the 3D-VAR saddle-point system reduces in size and may be solved faster. To this end, we assume to be given closed, non-trivial subspaces $\myURB \subset \myU$ and $\myYRB \subset \myY$, which we refer to as RB spaces. In practice, $\myURB$ and $\myYRB$ are typically low-dimensional.
In section \ref{sec:spaces}, we discuss how they can be chosen based on the intrinsic structure of the PDE. Note that we do not consider a reduction of the measurement space $\myT$, i.e. the RB problem is based on the same data and no measurements are ignored.

Our reduced-basis 3D-VAR problem has then the form
\begin{equation}\label{prob:RB}
\begin{aligned}
&\min _{(u_{\rm{R}},y_{\rm{R}},d) \in \myURB \times \myYRB \times \myT} \frac{1}{2} \myNormU{u_{\rm{R}}}^2 + \frac{\lambda}{2} \myNormY{d}^2 \text{ s.t. } \\
&\left.
\begin{array}{rll}
\mya(y_{\rm{R}},\psi_{\rm{R}}) &= \myfbk (\psi_{\rm{R}}) + \myb(u_{\rm{R}},\psi_{\rm{R}}) & \quad \forall~\psi_{\rm{R}} \in \myYRB \\
\mySPY{y_{\rm{R}}+d}{\tau} &= \mySPY{\mydata}{\tau} & \quad \forall ~\tau \in \myT.
\end{array}
\right.
\end{aligned}
\end{equation}
Similar to before, the reduced basis minimization \eqref{prob:RB} can be written as an equivalent saddle-point problem over the spaces $\myHeinsRB := \myURB \times \myYRB$ and $\myYRB$:  Given $\mu \in \mathcal{C}$, find $(\myheinsRB,\myhzweiRB) = ((\myuRB,\myyRB),\mywRB) \in \myHeinsRB \times \myYRB$ such that
\begin{equation}\label{prob:RB:sp}
\begin{aligned}
A(\myheinsRB,k_{\rm{R}}) + \myB(k_{\rm{R}},\myhzweiRB) &= F(k_{\rm{R}}) && \forall~k_{\rm{R}} \in \myHeinsRB  \\
\myB(\myheinsRB,\psi_{\rm{R}}) &= \myG(\psi_{\rm{R}}) && \forall~\psi_{\rm{R}} \in \myYRB.
\end{aligned}
\end{equation}
The forms $A$, $\myB$, $F$ and $\myG$ are the ones defined in \eqref{eq:defABG} restricted to the respective spaces.

Following standard nomenclature in the RB literature, we distinguish between the original 3D-VAR formulation \eqref{prob:truth} and its RB approximation \eqref{prob:RB} by referring to the former as the truth problem and the latter as the RB problem. On a structural level, the only formal difference between the truth and the RB problem is that $\myT$ is a closed subspace of the state space $\myY$, while in general $\myT \not \subset \myYRB$.
However, this property remained unused within the stability analysis in section \ref{sec:tStability}, which therefore directly applies to the RB case when $\myU$ and $\myY$ are formally replaced with $\myURB$ and $\myYRB$. We therefore omit a detailed stability analysis as well as the {\it a priori} error analysis. The latter directly follows from standard a-priori theory for Galerkin projections of saddle-point problems \cite{brezzi,Brezzi_book}. More specifically, in the present case we can even apply the results for symmetric problems with a coercive bilinear form $A$ from~\cite{Gerner_2012,Gerner_phd}.

\subsection{Computational Procedure for the RB System}

In the finite-dimensional setting and after a preparatory offline phase, the RB problem \eqref{prob:RB:sp} can be solved independently of the dimensions $\myN = \dim \myY$ and $\myM = \dim \myU$ of the high-fidelity spaces.
For given basis functions $(\phi_{\rm{R,} m})_{m=1}^M$ of $\myURB$ and $(\psi_{\rm{R,} n})_{n=1}^N$ of $\myYRB$, the basis coefficients $(\myvecuRB,\myvecyRB,\myvecwRB) \in \mathbb{R}^{M+2N}$ of $(\myheinsRB,\myhzweiRB) = (\myuRB,\myyRB,\mywRB) \in \myU \times \myY \times \myY$ can be computed by solving the linear system
\begin{equation} \label{eq:RB:numImp}
\begin{aligned}
\left(
\begin{array}{ccc}
\textbf{U}_{\rm{R}} & \textbf{0} & ~\textbf{B}_{\rm{R}}(\mu)~ \\
\textbf{0} & \lambda\textbf{P}_{\rm{R}} & \textbf{A}_{\rm{R}}^T(\mu) \\
~\textbf{B}_{\rm{R}}^T(\mu)~ & ~\textbf{A}_{\rm{R}}(\mu)~ & \textbf{0}
\end{array}
\right)
\left(
\begin{array}{c}
\myvecuRB \\
\myvecyRB \\
\myvecwRB
\end{array}
\right)
=
\left(
\begin{array}{c}
\textbf{0}\\
\lambda \myvecdataRB \\
\myvecfbkRB(\mu)
\end{array}
\right).
\end{aligned}
\end{equation}
Once $\myURB$ and $\myYRB$ have been chosen, e.g. with the approaches presented in section \ref{sec:spaces}, the matrices and vectors can be obtained from the previous ones in \eqref{eq:numImp:t} by multiplication with the basis representations of $(\phi_{\rm{R,}m})_{m=1}^{\myM}$ in $(\phi_m)_{m=1}^M$, and $(\psi_{\rm{R,}n})_{n=1}^N$ in $(\psi_n)_{n=1}^{\myN}$.
This computation needs only to be performed once, since the $\mu$-dependent forms $\textbf{A}_{\rm{R}}(\mu)$, $\textbf{B}_{\rm{R}}(\mu)$ and $\myvecfbkRB(\mu)$ can be assembled from small, stored, $\mu$-independent matrices in $\mathcal{O}(Q_aN^2+Q_bMN+Q_fN)$ within the online phase thanks to the affine decomposition~\eqref{eq:affine}.

For any measurement data $\myvecmeasured \in \mathbb{R}^L$, the vector $\myvecdataRB$ can be obtained via $\myvecdataRB = (\mathbf{S}_{\rm{R}}\mathbf{T}^{-1}) \mathbf{m}$, where $\mathbf{T} \in \mathbb{R}^{L \times L}$, $\mathbf{T}_{i,j} := g_i(\tau_j)$ and $\mathbf{S}_{\rm{R}}\in \mathbb{R}^{N \times L}$, $(\mathbf{S}_{\rm{R}})_{i,j}:= \mySPY{\tau_j}{\psi_{\rm{R},i}}$ for the basis $(\tau_l)_{l=1}^L$ of $\myT$.
Assuming that $\mathbf{S}_{\rm{R}}\mathbf{T}^{-1}$ is precomputed, each new measurement set $\myvecmeasured$ necessitates computational cost of order $\mathcal{O}(NL)$.
Once assembled, the saddle-point system \eqref{eq:RB:numImp} can be solved in $\mathcal{O}((M+2N)^3)$.
Altogether, the online solves are completely independent of the dimensionality of the truth spaces.


\subsection{{\it A Posteriori} Error Estimation}
\label{sec:aposteriori}

Given the truth and RB saddle-point problems \eqref{prob:truth:sp} and \eqref{prob:RB:sp}, we may follow the approach in \cite{Gerner_2012,gerner_siam_sp} to derive {\it a posteriori} error bounds for $\myheins-\myheinsRB$ and $\myhzwei-\myhzweiRB$.
However, these bounds provide no individual information on the error between the model corrections $\myu$ and $\myuRB$, or between the state estimates $\myy$ and $\myyRB$.
We therefore pursue the approach from \cite{Kaercher_optCon} to develop separate error bounds for the errors $\myeu := \myu - \myuRB \in \myU$ in the model correction, $\myey := \myy - \myyRB \in \myY$ in the state estimate, and $\myew := \myw - \mywRB \in \myY$ in the adjoint solution.
In addition, we provide an error bound for the difference $\myed := \myd - \mydRB \in \myT$ between the misfits $\myd := \mydata - \myProj{\myT}\myy \in \myT$ and $\mydRB := \mydata - \myProj{\myT}\myyRB \in \myT$.

We first define the residual functions
\begin{equation} \label{eq:def:r}
\begin{aligned}
&\myru : \myU \rightarrow \mathbb{R}: &&r_u(\phi) := \myb(\phi,\mywRB) - \mySPU{\myuRB}{\phi}, \\
&\myrp : \myY \rightarrow \mathbb{R}: &&r_p(\psi) := \lambda \mySPY{\psi}{\mydRB} - \mya(\psi,\mywRB), \\
&\myry : \myY \rightarrow \mathbb{R}: &&r_y(\psi) := \myfbk(\psi) + \myb(\myuRB,\psi) - \mya(\myyRB,\psi),
\end{aligned}
\end{equation}
all three of which are linear, continuous and $\mu$-dependent.
By subtracting \eqref{prob:RB:sp} from \eqref{prob:truth:sp} and varying over the whole spaces $\myU$, $\myY$, $\myT$ individually, the error tuple $(\myeu,\myey,\myed,\myew) \in \myU \times \myY \times \myY \times \myT$ solves the variational system
\begin{subequations}\label{eq:errPDE}
\begin{align}
\mySPU{\myeu}{\phi} - \myb(\phi,\myew) &= \myru(\phi) && \forall ~ \phi \in \myU, \label{eq:errPDE:1}\\
\mya(\psi,\myew) - \lambda \mySPY{\myed}{\psi} &= \myrp(\psi) && \forall ~\psi \in \myY, \label{eq:errPDE:2}\\
\mya(\myey,\psi) - \myb(\myeu,\psi) &= \myry(\psi) && \forall ~ \psi \in  \myY, \label{eq:errPDE:3}\\
\mySPY{\myey}{\tau} + \mySPY{\myed}{\tau} &= 0 && \forall ~ \tau \in \myT. \label{eq:errPDE:4}
\end{align}
\end{subequations}
The first three equations provide alternative representations of $\myru$, $\myrp$ and $\myry$, which can be used to bound their norms in regard to the approximation errors by
\begin{equation}\label{eq:normbound:r}
\begin{aligned}
\myNreins &~~\le~~ \myNormU{\myeu} + \mygammab ~ \myNormY{\myew} \\
\myNrzwei &~~\le~~ \mygammaa~\myNormY{\myew} + \lambda \myNormY{\myed} \\
\myNrdrei &~~\le~~ \mygammaa~\myNormY{\myey} + \mygammab~\myNormU{\myeu}.
\end{aligned}
\end{equation}
Hence, small errors are reflected by the residual norms.
In case the truth and the RB solution coincide, $\myru$, $\myrp$ and $\myry$ equal zero. Following the alternative approach in \cite{Kaercher_optCon}, we obtain the following:

\begin{theorem} \label{thm:aPos:ErrB}
Let $\mu \in \mathcal{C}$ be given and define
%
%
%
%
\begin{equation*}
\begin{aligned}
p_u &~:=~ \myNreins + \frac{\mygammab}{\myalpha} ~\myNrzwei &&
q_u ~:=~ \frac{2}{\myalpha} \myNrzwei\myNrdrei +  \frac{\lambda}{4 \myalpha^2} \myNrdrei^2 \\
p_d &~:=~ \frac{1}{\myalpha}\myNrdrei &&
q_d ~:=~ \frac{2}{\lambda\myalpha} \myNrzwei\myNrdrei + \frac{1}{4\lambda}p_u^2.
\end{aligned}
\end{equation*}
Then for the unique solution $(\myeu,\myey,\myed,\myew) \in \myU \times \myY \times \myY \times \myT$ of \eqref{eq:errPDE}, we have
\begin{equation} \label{eq:apos:indiv}
\begin{aligned}
\myNormU{\myeu} &~\le~ \frac{1}{2} p_u + \sqrt{\frac{1}{4}p_u^2 + q_u} &&
\quad \myNormY{\myey} ~\le~ \frac{1}{\myalpha}\myNrdrei  + \frac{\mygammab}{\myalpha}  \myNormU{\myeu} \\
\myNormY{\myed} &~\le~  \frac{1}{2} p_d + \sqrt{\frac{1}{4}p_d^2 + q_d} &&
\quad \myNormY{\myew} ~\le~ \frac{1}{\myalpha} \myNrzwei + \frac{\lambda}{\myalpha}  \myNormY{\myed}.
\end{aligned}
\end{equation}
\end{theorem}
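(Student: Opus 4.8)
The plan is to collapse the four coupled equations of the error system \eqref{eq:errPDE} into a single scalar energy identity and then extract from it two quadratic inequalities, one for $\myNormU{\myeu}$ and one for $\myNormY{\myed}$, whose nonnegative roots are exactly the claimed bounds; the estimates for $\myNormY{\myey}$ and $\myNormY{\myew}$ will then drop out directly from coercivity.

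First I would record the two ``linear'' estimates, which are in fact the last two claimed bounds. Testing \eqref{eq:errPDE:3} with $\psi=\myey$ and using coercivity \eqref{eq:def:alpha} of $\mya$ and continuity \eqref{eq:uniformBound:ab} of $\myb$ gives $\myalpha\myNormY{\myey}^2 \le \myNrdrei\,\myNormY{\myey} + \mygammab\,\myNormU{\myeu}\myNormY{\myey}$; dividing by $\myNormY{\myey}$ (the case $\myey=0$ being trivial) yields $\myNormY{\myey} \le \frac{1}{\myalpha}\myNrdrei + \frac{\mygammab}{\myalpha}\myNormU{\myeu}$. Testing \eqref{eq:errPDE:2} with $\psi=\myew$ gives in the same manner $\myNormY{\myew} \le \frac{1}{\myalpha}\myNrzwei + \frac{\lambda}{\myalpha}\myNormY{\myed}$. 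Next I would derive the energy identity: testing \eqref{eq:errPDE:1} with $\phi=\myeu$, \eqref{eq:errPDE:2} with $\psi=\myey$, \eqref{eq:errPDE:3} with $\psi=\myew$, and \eqref{eq:errPDE:4} with $\tau=\myed$ produces $\myNormU{\myeu}^2 - \myb(\myeu,\myew) = \myru(\myeu)$, $\mya(\myey,\myew) - \lambda\mySPY{\myed}{\myey} = \myrp(\myey)$, $\mya(\myey,\myew) - \myb(\myeu,\myew) = \myry(\myew)$, and (since $\myed\in\myT$, so that \eqref{eq:errPDE:4} reads $\myProj{\myT}\myey=-\myed$) the relation $\mySPY{\myed}{\myey} = -\myNormY{\myed}^2$. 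Eliminating $\myb(\myeu,\myew)$ and $\mya(\myey,\myew)$ among these four yields
\[
\myNormU{\myeu}^2 + \lambda\myNormY{\myed}^2 = \myru(\myeu) + \myrp(\myey) - \myry(\myew).
\]

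I would then bound the right-hand side by $\myru(\myeu)\le\myNreins\myNormU{\myeu}$, $\myrp(\myey)\le\myNrzwei\myNormY{\myey}$, $-\myry(\myew)\le\myNrdrei\myNormY{\myew}$, and insert the two linear estimates just obtained. After collecting the terms proportional to $\myNormU{\myeu}$ into the coefficient $p_u=\myNreins+\frac{\mygammab}{\myalpha}\myNrzwei$, this gives the common intermediate inequality
\[
\myNormU{\myeu}^2 + \lambda\myNormY{\myed}^2 \;\le\; p_u\,\myNormU{\myeu} + \tfrac{2}{\myalpha}\myNrzwei\myNrdrei + \tfrac{\lambda}{\myalpha}\myNrdrei\,\myNormY{\myed}.
\]
The two desired inequalities come from two \emph{opposite} applications of Young's inequality to this single estimate. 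Using $\frac{\lambda}{\myalpha}\myNrdrei\,\myNormY{\myed}\le\lambda\myNormY{\myed}^2+\frac{\lambda}{4\myalpha^2}\myNrdrei^2$ cancels the $\lambda\myNormY{\myed}^2$ on the left and leaves $\myNormU{\myeu}^2\le p_u\myNormU{\myeu}+q_u$; alternatively, using $p_u\myNormU{\myeu}\le\myNormU{\myeu}^2+\frac14 p_u^2$ cancels the $\myNormU{\myeu}^2$ on the left and, after dividing by $\lambda$, leaves $\myNormY{\myed}^2\le p_d\myNormY{\myed}+q_d$. Each has the form $x^2\le px+q$ with $x,p,q\ge0$, whose admissible values satisfy $x\le\frac12 p+\sqrt{\frac14 p^2+q}$, giving precisely the stated bounds for $\myNormU{\myeu}$ and $\myNormY{\myed}$; substituting these back into the two linear estimates finishes the $\myNormY{\myey}$ and $\myNormY{\myew}$ bounds.

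The main obstacle is conceptual rather than computational: because $\myey$ and $\myew$ are coupled to $\myeu$ and $\myed$ through the off-diagonal forms $\myb$ and $\lambda\myProj{\myT}$, no single test function isolates $\myNormU{\myeu}$ or $\myNormY{\myed}$, and the dependence is genuinely quadratic. The key idea is that one symmetric energy identity can feed two bounds via the two opposite Young splittings, with the $\lambda$-weights arranged so that either the $\lambda\myNormY{\myed}^2$ term or the $\myNormU{\myeu}^2$ term cancels cleanly; keeping this bookkeeping of the $\lambda$- and $\myalpha$-factors consistent with the definitions of $p_u,q_u,p_d,q_d$ is the delicate part.
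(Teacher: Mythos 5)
Your proposal is correct and follows essentially the same route as the paper's proof: the same two coercivity-based linear estimates for $\myNormY{\myey}$ and $\myNormY{\myew}$, the same identity $\myNormU{\myeu}^2+\lambda\myNormY{\myed}^2=\myru(\myeu)+\myrp(\myey)-\myry(\myew)$ (the paper obtains it by chained substitution starting from $\phi=\myeu$ in \eqref{eq:errPDE:1} rather than by simultaneous elimination, but it is the same computation), and the same two opposite Young splittings of the resulting inequality followed by the quadratic formula.
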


\begin{proof}
To bound $\myNormY{\myew}$, we use the coercivity \eqref{eq:def:alpha} of $\mya$, and set $\psi := \myew$ in \eqref{eq:errPDE:2} to obtain
\begin{equation} \label{eq:apos:proof:w}
\begin{aligned}
\myalpha \,  \myNormY{\myew} ~\le~ \frac{\mya (\myew,\myew)}{\myNormY{\myew}} ~=~ \frac{\myrp(\myew) + \lambda \mySPY{\myed}{\myew}}{\myNormY{\myew}} ~\le~ \myNrzwei  + \lambda \, \myNormY{\myed}.
\end{aligned}
\end{equation}
The bound for $\myNormY{e_p}$ is derived similarly with $\psi := \myey \in \myY$ in equation \eqref{eq:errPDE:3}; we thus get
\begin{equation} \label{eq:apos:proof:y}
\begin{aligned}
\myalpha \,  \myNormY{\myey} \le \frac{\mya (\myey,\myey)}{\myNormY{\myey}}
= \frac{\myry(\myey) + \myb(\myeu,\myey)}{\myNormY{\myey}}
\le \myNrdrei + \mygammab \, \myNormU{\myeu}.
\end{aligned}
\end{equation}
For $\myNormU{\myeu}$ and $\myNormY{\myed}$, we start with the choice $\phi = \myeu \in \myU$ in \eqref{eq:errPDE:1},
\begin{equation} \label{eq:apos:proof:u}
\begin{aligned}
\myNormU{\myeu}^2 &\stackrel{\eqref{eq:errPDE:1}}{~~=~~} \myru(\myeu) + \myb(\myeu,\myew) \\
&\stackrel{\eqref{eq:errPDE:3}}{~~=~~} \myru(\myeu) - \myry(\myew) + \mya(\myey,\myew) \\
&\stackrel{\eqref{eq:errPDE:2}}{~~=~~} \myru(\myeu) - \myry(\myew) + \myrp(\myey) + \lambda \mySPY{\myed}{\myey} \\
&\stackrel{\eqref{eq:errPDE:4}}{~~=~~}  \myru(\myeu) - \myry(\myew) + \myrp(\myey) - \lambda \myNormY{\myed}^2 \\
&\stackrel{~}{~~\le ~~} \myNreins \myNormU{\myeu} + \myNrdrei \myNormY{\myew} + \myNrzwei \myNormY{\myey} - \lambda \myNormY{\myed}^2.
\end{aligned}
\end{equation}
The use of the previous bounds \eqref{eq:apos:proof:w} and \eqref{eq:apos:proof:y} in the second and third term yields
\begin{equation} \label{eq:apos:proof:res}
\begin{aligned}
&\myNormU{\myeu}^2 + \lambda \myNormY{\myed}^2 \\
& ~\le~ (\myNreins +  \frac{ \mygammab}{\myalpha}~\myNrzwei )\myNormU{\myeu}+\frac{ 2 \myNrzwei\myNrdrei}{\myalpha} +  \frac{\lambda\myNrdrei }{\myalpha}\myNormY{\myed}.
\end{aligned}
\end{equation}
For the last term, we have $\frac{\lambda}{\myalpha}\myNrdrei \myNormY{\myed} \le \frac{\lambda}{4 \myalpha^2} \myNrdrei^2 + \lambda \myNormY{\myed}^2$ by Young's Inequality.
After subtracting $\lambda \myNormY{\myed}^2$ from either side of \eqref{eq:apos:proof:res}, we have
\begin{align*}
\myNormU{\myeu}^2 ~~\le~~ (\myNreins + \frac{\mygammab}{\myalpha}~\myNrzwei) \myNormU{\myeu} +  \frac{2}{\myalpha} \myNrzwei\myNrdrei +  \frac{\lambda \myNrdrei^2}{4 \myalpha^2},
\end{align*}
which yields the bound for $\myNormU{e_u}$ with the quadratic formula.
For the bound of $\myNormY{\myed}$ we use Young's Inequality on the first term in the inequality \eqref{eq:apos:proof:res} to get
\begin{align*}
(\myNreins + \frac{\mygammab}{\myalpha} ~\myNrzwei) \myNormU{\myeu}
~~\le ~~
\frac{1}{4} (\myNreins + \frac{\mygammab}{\myalpha}~\myNrzwei)^2 + \myNormU{\myeu}^2.
\end{align*}
Then, $\myNormU{\myeu}^2$ can be subtracted from either side of \eqref{eq:apos:proof:res}, which leaves
\begin{align*}
\lambda \myNormY{\myed}^2 \le  \frac{\lambda \myNrdrei}{\myalpha} \myNormY{\myed} +  \frac{2}{\myalpha} \myNrzwei\myNrdrei + \frac{1}{4} (\myNreins + \frac{\mygammab}{\myalpha} ~\myNrzwei)^2.
\end{align*}
The bound for $\myNormY{e_d}$ can now also be obtained with the quadratic formula.
\qed
\end{proof}
The error bounds in Theorem \ref{thm:aPos:ErrB} can be used in the finite-dimensional setting to bound the approximation error without computing the truth solution.
First note that the bounds decrease with the coercivity constant $\myalpha$, which can therefore be substituted with its efficiently computable lower bound $\myalphaLB \le \myalpha$.
Analogously, $\mygammab$ can be replaced with its upper bound $\mybUB$.
Due to the affine parameter dependence assumed in \eqref{eq:affine}, the residual norms can be computed from \eqref{eq:def:r} in an offline-online-procedure independent of the space dimensions $\myN = \dim \myY$ and $\myM = \dim \myU$.
Their computation is a standard procedure in the RB literature and therefore omitted, see e.g. \cite{RB_Patera_2007}.
To summarize, we then obtain {\it a posteriori} bounds $\myDu$, $\myDy$, $\myDd$ and $\myDw$ for $\myNormU{\myeu}$, $\myNormY{\myey}$, $\myNormY{\myed}$, and $\myNormY{\myew}$ that may be computed fast and efficiently. We note that the boundedness \eqref{eq:normbound:r} of the residual norms with respect to the errors ensures a correspondence between the {\it a posteriori} error bounds and the behavior of the true error.


\section{Space Construction}
\label{sec:spaces}

In the previous sections, we have assumed the measurement space $\myT$ and the RB spaces $\myURB$ and $\myYRB$ to be given. In this section, we present ideas on how the previously discussed properties concerning stability and approximation quality may be used for constructing the spaces. More specifically, we first introduce a greedy-OMP algorithm for the stability-based generation of $\myT$, and subsequently present an approach for the construction of the reduced basis spaces.
\subsection{Greedy-OMP Algorithm} \label{sec:greedyOMP}
In some applications, prior information can allow for a low-dimensional model correction space $\myU$.
One reason would be that, due to the known presence of noise, the focus of interest lies in the large-scale behaviour of the model correction rather than noise-sensitive details and oscillations.
Such a low-dimensional choice can be of great benefit, since it can counteract noise amplification already for low-dimensional measurement spaces, as indicated by our stability analysis in section \ref{sec:tStability}.
In the following, we propose a greedy-OMP algorithm that chooses a measurement space $\myT$ such that 1) the influence of noise upon the 3D-VAR model correction and state estimates is bounded independently of $\lambda$ and $\mu$, and 2)
solutions of the modified model \eqref{eq:forward} are distinguishable for different parameters.

We presuppose that $M := \dim \myU$ is small and that $\myY$ can sufficiently capture the modifications induced by $\myU$ so that $\myetainf > 0$ holds uniformly on $\mathcal{C}$.
Then, by the stability analysis in section \ref{sec:tStability}, the amplification of noise is bounded for the 3D-VAR model correction and state solution $\myheins$ $\lambda$-independently if $\mykappa > 0$.
For any fixed parameter $\mu$, we can compute the space $\myYmu$ of state modifications
by considering the forward problem
\begin{align}\label{eq:greedyOMP:1}
\text{For }u_0 \in \myU \text{ find } y_{\mu}(u_0)\in \myY \text{ s.t. }\mya(y_{\mu}(u_0),\psi) = \myb(u_0,\psi) \quad \forall ~\psi \in \myY.
\end{align}
Then $\myYmu = \text{span}(y_{\mu}(\phi_m))_{m=1}^M$ for any basis $(\phi_m)_{m=1}^M$ of $\myU$.

We assume to be given a library $\mathcal{L} \subset \myY'$, from which we may select different measurement functionals $g_l \in \mathcal{L}$ whose Riesz representation then spans $\myT$, see \eqref{eq:def:T}.
For any fixed parameter $\mu$, a generalized OMP algorithm can be applied to the space $\myYmu$ to iteratively expand $\myT$ and --- under some restrictions upon the library --- enforce $\mykappa > 0$  for $L = \dim \myT \ge M = \dim \myYmu$ (see \cite{Olga}).
Theoretically, $\myT$ can be expanded by repeating this procedure for sufficiently many parameters.
However, the necessity to solve the forward problem \eqref{eq:greedyOMP:1} for $u_0 = \phi_m$, $m=1,...,M$ and each parameter accumulates a high computational cost.

We can improve this procedure with our RB approximation:
We set $\myURB :=\myU$ and assume to have an RB space $\myYRB \subset \myY$ with $\myetainfRB > 0$ that sufficiently approximates $\myYmu$ in the following sense:
For any $\mu \in \mathcal{C}$ there exists an $\varepsilon_{\mu}$, $0 \le \varepsilon_{\mu} \ll 1$, with:
\begin{equation} \label{eq:YmuApproxProperty}
\text{If } (u,y) \in \myHeins^0(\mu) \text{ and } (u,y_{\rm{R}}) \in \myHeinsRB^0(\mu) \text{ then } \myNormY{y - y_{\rm{R}}} \le \varepsilon_{\mu} \myNormY{y}.
\end{equation}
We emphasize that the RB spaces here may be considered as completely separate from the RB 3D-VAR method. We use the same notation as in section \ref{sec:RB} only for simplicity, as it avoids the needless repetition of definitions and results.

For ${u_0} \in \blue{\myU = \myURB}$, let $y({u_0}) \in \myYmu$ and $y_{\rm{R}}({u_0})\in \myYmuRB$ denote the unique elements with $({u_0},y({u_0})) \in \myHeins^0(\mu)$ and $({u_0},y_{\rm{R}}({u_0})) \in \myHeinsRB^0(\mu)$ respectively.
Then
\begin{align}\label{eq:greedyOMP:2}
\frac{\myNormY{y_{\rm{R}}({u_0})}}{\myNormY{y({u_0})}} \ge
\frac{\myNormY{y({u_0})} - \myNormY{y({u_0})-y_{\rm{R}}({u_0})}}{\myNormY{y({u_0})}} \ge
1-\varepsilon_{\mu}.
\end{align}
The inf-sup constant $\mykappa$ on the high fidelity space $\myY$ can then be bounded with respect to $\mykappaRBmu$ on the RB space $\myYRB$ via
\begin{align*}
\mykappa =& \inf _{{u_0} \in \blue{\myU}} \sup _{\tau \in \myT}\frac{\mySPY{y({u_0})}{\tau}}{\myNormY{y({u_0})}\myNormY{\tau}} \\
=& \inf _{{u_0} \in \blue{\myU}} \sup _{\tau \in \myT} \frac{\mySPY{y_{\rm{R}}({u_0})}{\tau}}{\myNormY{y({u_0})} \myNormY{\tau}}  + \frac{\mySPY{y({u_0})-y_{\rm{R}}({u_0})}{\tau}}{\myNormY{y({u_0})} \myNormY{\tau}}\\
\ge & \inf _{{u_0} \in \blue{\myU}}  \frac{\myNormY{y_{\rm{R}}({u_0})}}{\myNormY{y({u_0})}}\sup _{\tau \in \myT} \frac{\mySPY{y_{\rm{R}}({u_0})}{\tau}}{\myNormY{y_{\rm{R}}({u_0})} \myNormY{\tau}} - \varepsilon_{\mu} \\
\ge & ~(1-\varepsilon_{\mu}) \inf _{{u_0} \in \blue{\myU = \myURB}} \sup _{\tau \in \myT} \frac{\mySPY{y_{\rm{R}}({u_0})}{\tau}}{\myNormY{y_{\rm{R}}({u_0})} \myNormY{\tau}} - \varepsilon_{\mu}\\
= & ~(1-\varepsilon_{\mu}) \mykappaRBmu - \varepsilon_{\mu}.
\end{align*}
Hence, $\myT$ is a stabilizing choice for the truth 3D-VAR method, if the inf-sup condition $\mykappaRBmu > 0$ holds and $\varepsilon_{\mu}$ is sufficiently small.

The greedy-OMP algorithm \ref{alg:greedyOMP} generates a measurement space $\myT$ from the library $\mathcal{L} \subset \myY'$ to increase $\mykappaRBmu$ over a training set $\Xi _{\rm{train}} \subset \mathcal{C}$.
The initial computation of $\myYmuRB$ for $\mu \in \Xi _{\rm{train}}$ \blue{(line \ref{line:compYmuRB}), and the searches for $y_L$ (line \ref{line:OMP1}) and $\mu_{L+1}$ (line \ref{line:findNext})} in each iteration requires only online operations, i.e., independent of the FE dimensions.
The selection of the measurement functional \blue{$g_L$ (line \ref{line:OMP2})} depends on \blue{the FE-dimension} $\myN = \dim \myY$ and the size of the library, \blue{and the expansion of $\myT$ (line \ref{line:expansion}) needs $\myN$-dependent computations for the Riesz representation and orthonormalization.
However, both (lines \ref{line:OMP2} and \ref{line:expansion})} need to be performed only once per iteration.
In the algorithm, \blue{the measurement functional $g_L$ is chosen in lines \ref{line:OMP1} and \ref{line:OMP2} so that the expansion of $\myT$ in line \ref{line:expansion} increases $\mykappaRBmu[\mu_{L}]$.
This is an application to the space $\myYmuRB[\mu_L]$ of the worst-case OMP algorithm in \cite{Olga}, which is in turn a greedy generalization of classical OMP algorithms.}
Other selection strategies for increasing the inf-sup constant with regard to measurement functionals \blue{exist and can be substituted in lines \ref{line:OMP1}-\ref{line:OMP2}; we refer to \cite{Olga} and \cite{PBDW} for further options and both analytical and numerical comparisons.}

\begin{algorithm}
\caption{Greedy Orthogonal Matching Pursuit}\label{alg:greedyOMP}
\begin{algorithmic}[1]
\Require $\Xi _{\rm{train}} \subset \mathcal{C}$ training set, $\mathcal{L} \subset \myY'$ library, $\beta_0> 0$ target value, $\mu_1 \in \Xi _{\rm{train}}$, $L_{\rm{max}}$
\State	Compute $\myYmuRB$ for each $\mu \in \Xi _{\rm{train}}$ \label{line:compYmuRB}
\State 	$\myT \gets \{0\}$, $L \gets 0$, $\beta \gets 0$
\While	{$\beta \le \beta_0$ \textbf{ and } $L \le L_{\rm{max}}$}
\State	$L \gets L+1$
\State   \blue{$y_L \gets \text{arg} \max \{\myNormY{y - \myProj{\myT}y}:~ y \in \myYmuRB[\mu_L], \myNormY{y} = 1 \}$} \label{line:OMP1}
\State   \blue{$g_L \gets \text{arg} \max _{g \in \mathcal{L}} |g(y_L - \myProj{\myT}y_L)| / \myNorm[\myY']{g}$} \label{line:OMP2}
\State   Expand $\myT$ with the Riesz representation of $g_L$ \label{line:expansion}
\State  	Find $\mu_{L+1} \in \text{arg} \min _{\mu \in \Xi _{\rm{train}}} \mykappaRBmu$ \label{line:findNext}
\State   $\beta \gets \mykappaRBmu[\mu_{L+1}]$
\EndWhile
\end{algorithmic}
\end{algorithm}

If the 3D-VAR method is to be employed for parameter estimation, then $\myT$ should be able to distinguish between solutions of the modified model \eqref{eq:forward} for different parameters.
To this end, we define, for $\mu,\nu \in \mathcal{C}$,
\begin{align*}
\beta_{\myT}(\mu,\nu) := \inf _{y \in \myYmu[(\mu,\nu)]} \sup _{\tau \in \myT} \frac{\mySPY{y}{\tau}}{\myNormY{y}\myNormY{\tau}}
\text{~ with ~}
\myYmu[(\mu,\nu)] := \text{span} \{~\myybk[\mu],\myybk[\nu],\myYmu[\mu],\myYmu[\nu]~\}.
\end{align*}
Here, $\myybk[\mu]$ and $\myybk[\nu]$ are the best-knowledge solutions of \eqref{eq:bk} for the respective parameters.
If $y_{\mu} = y_{\mu}(u_1)$ and $y_{\nu} = y_{\nu}(u_2)$ are solutions of the modified model \eqref{eq:forward} for parameters $\mu,\nu \in \mathcal{C}$ and modifications $u_1,u_2 \in \myU$, then $y_{\mu},y_{\nu} \in \myYmu[(\mu,\nu)]$.
The experimentally-observable difference $\myNormY{\myProj{\myT}(y_{\mu}-y_{\nu})} \ge \beta_{\myT}(\mu,\nu)\myNormY{y_{\mu}-y_{\nu}}$ is hence bounded from below relative to their actual distance.
For parameter estimation, $\myT$ should thus be chosen such that $\beta_{\myT}(\mu,\nu) > 0$ on $\mathcal{C} \times \mathcal{C}$.
We can generate $\myT$ similarly to before, with a slight modification of the greedy-OMP algorithm \ref{alg:greedyOMP}:
In addition to $\mykappaRBmu$, we consider
\begin{align}\label{eq:def:kappaRB:mu:nu}
\beta_{\myT\rm{,R}}(\mu,\nu) := \inf _{y \in \myYmuRB[(\mu,\nu)]} \sup _{\tau \in \myT} \frac{\mySPY{y}{\tau}}{\myNormY{y}\myNormY{\tau}}
\end{align}
over a training set in $\mathcal{C}\times\mathcal{C}$ with $\myYmu[(\mu,\nu)] := \text{span} \{~\myybkRB[\mu],\myybkRB[\nu],\myYmuRB[\mu],\myYmuRB[\nu]~\}$ with RB approximations $\myybkRB[\mu]$ and $\myybkRB[\nu]$ of the best-knowledge states $\myybk[\mu]$ and $\myybk[\nu]$.
Note that this additionally requires $\myYRB$ to approximate the best-knowledge model \eqref{eq:bk}.
The spaces $\myYmuRB[(\mu,\nu)]$ can be computed efficiently online from the RB space $\myYRB$. It is therefore possible to consider different sets of training parameters in the course of the algorithm to reduce the required amount of memory.

\subsection{Construction of reduced basis spaces}
\label{sec:stepwise}

We next consider \blue{the} construction of the reduced basis spaces $\myYRB$ and $\myURB$. If the measurement space $\myT$ and the measurements --- and thus the data state $\mydata \in \myT$ --- are known {\it a priori}, we can directly use a greedy approach similar to the one for optimal control problems in~\cite{Kaercher_optCon}, see \cite{Masterarbeit_Nicole}. However, here we propose a different approach which does not require $\myT$ or $\mydata$ to be known {\it a priori}.

By varying in \eqref{prob:truth:sp} separately over the spaces $\myU$ and $\myY$, and inserting $\myd = \mydata - \myProj{\myT}\myy$, the following system is obtained for the model correction $\myu$, state $\myy$, and adjoint solution $\myw$:
\begin{subequations} \label{eq:t:pde}
\begin{align}
\mySPU{\myu}{\phi} - \myb(\phi,\myw) &= 0 && \forall ~\phi \in \myU \label{eq:t:pde:1}\\
\mya(\psi,\myw) - \lambda \mySPY{\psi}{\myd} &= 0 && \forall ~\psi \in \myY \label{eq:t:pde:2}\\
\mya(\myy,\psi) - \myb(\myu,\psi) &= \myfbk(\psi) && \forall ~ \psi \in \myY \label{eq:t:pde:3}\\
\mySPY{\myy + \myd}{\tau} &= \mySPY{\mydata}{\tau} && \forall ~ \tau \in \myT. \label{eq:t:pde:4}
\end{align}
\end{subequations}
We assume that $\myURB \subset \myU$ is fixed and low-dimensional, so that $\myu$ can be approximated well enough in $\myURB$ for the expected range of measurement data and the desired level of detail.
\blue{As $\myu$ is not known,} the RB space $\myYRB$ needs to provide good approximations \blue{to the solution of \eqref{eq:t:pde:3} for each model modification in $\myURB$ relative to its norm.
We realise this by constructing a state space $\myY_y \subset \myY$ as}
an RB approximation of the forward problem
\begin{equation}
\begin{aligned}
&\text{For } \mu \in \mathcal{C} \text{ and } f \in \{~\myfbk\} \cup \{~ \myb(\phi_{\rm{R,}m},\cdot):~1\le m\le M ~\} \subset \myY':\\
&\text{Find } y \in \myY \text{ s.t. } \mya(y,\psi) = f(\psi) \quad \forall ~ \psi \in \myY.
\end{aligned}
\end{equation}
If $\myT$ is not fixed by the experimental design, it can now be generated from $\myY_y$ with the greedy-OMP algorithm \ref{alg:greedyOMP}.
\blue{Given $\myT$ and f}ollowing an analogous argument as before for the generation of $\myY_y$, we can obtain an adjoint space $\myY_p \subset \myY$ if we iteratively replace $\myd$ in \eqref{eq:t:pde:2} with orthonormal basis functions $(\tau_l)_{l=1}^L$ of $\myT$ and perform an RB approximation of each equation over the parameter domain $\mathcal{C}$.
By considering a relative target accuracy, we can use $\lambda=1$ for the RB \blue{space generation, but note that since t}he approximation quality of the adjoint solution $\myw$ scales with $\lambda$, the target accuracy \blue{for} the RB \blue{adjoint} approximation should be chosen with respect to the largest regularisation parameter $\lambda$ that the RB 3D-VAR method is expected to be used for.
\blue{Finally, we set} $\myYRB:= \myY_y + \myY_p$.

\blue{T}he following sketch \blue{summarises this} consecutive construction of the spaces:
\begin{center}
\begin{minipage}[c]{0.95\textwidth}
\begin{tikzpicture}[node distance = 2.5cm, auto]
\node (l1) {$\myURB$};
\node [right of=l1](l2){$\mathcal{Y}_y$};
\node [right of=l2](l3){$\myT$};
\node [right of=l3](l4){$\mathcal{Y}_p$};
\node [right of=l4](l5){$\myYRB=\myY_y+\mathcal{Y}_p$.};
\path [line] (l1) -- node {\eqref{eq:t:pde:3}} (l2);
\path [line] (l2) -- node {\textit{Alg.} \ref{alg:greedyOMP}} (l3);
\path [line] (l3) -- node {\eqref{eq:t:pde:2}} (l4);
\path [line] (l4) -- (l5);
\end{tikzpicture}
\end{minipage}
\end{center}
\blue{T}his stepwise selection of the spaces avoids two possible drawbacks:
First, the data state $\mydata$ needs not to be known at the start of the offline phase;
the RB spaces are hence not influenced by measurement noise.
Second, once the spaces are constructed, they can be used repeatedly for different measurement data $\mydata$ without necessitating additional offline costs.

As $\myYRB= \myY_y + \myY_p$ comprises $M+L+1$ RB approximation spaces, $\myYRB$ may become large. The target accuracy and the training set should thus be chosen carefully to reduce the offline computational time.
Once the data state $\mydata$ becomes known, $\myYRB$ can be condensed by following the two-step RB approach in~\cite{EHK+2012}, i.e., by subsequently applying a greedy algorithm to derive spaces of smaller dimensions.


\section{Numerical Results}
\label{sec:experiments}
In this section we present numerical experiments to verify our theoretical results.
All computations were performed with MATLAB\textsuperscript{\textregistered} on a computer with 2.5 GHz Intel Core i5 processor and 4 GB of RAM.
%
%
%
\subsection{Model Description}
We consider the steady state temperature distribution $y$ of a thermal block $\Omega = (0,1)\times(0,1)$ that comprises up to three different material properties within
$\Omega_1 = (\text{\small $\frac{1}{4}$},\text{\small$\frac{3}{4}$}) \times (\text{\small$\frac{1}{4}$},\text{\small $\frac{3}{4}$})$,
$\Omega_2 = (0,1)\times(\text{\small$\frac{1}{2}$},1) \setminus \Omega_1$, and
$\Omega_{0} = (0,1)\times(0,\text{\small$\frac{1}{2}$}) \setminus \Omega_1 $.
We consider parameters $\mu = (\mu_1,\mu_2) \in \mathcal{C} := [\frac{1}{10},10]^2$, where $\mu_i$, $i \in \{1,2\}$ is the ratio of the thermal conductivity of $\Omega_i$ to $\Omega_0$.
We divide the outer boundary $\partial \Omega =  \myGin \cup \myGdir \cup \myGneu$, with $\myGneu = \{0,1\} \times [0,1]$, $\myGdir = [0,1]\times \{1\} $, and $\myGin = [0,1] \times \{0\}$,
and confer different boundary conditions on each by $\nabla y \cdot n = 0$ a.e. on $\myGneu$ (zero Neumann flux), $y|_{\myGdir} = 0$ a.e. on $\myGdir$ (zero Dirichlet) and $\nabla y \cdot n = u$ a.e. on $\myGin$ (Neumann flux).
Here, $n$ is the outer unit normal to $\Omega$ and $u$ is a yet unspecified function in $\myUcont := L^2(\myGin)$.
Across the subdomains, we  require that both the temperature $y$ as well as the heat flux is continuous.
An outline of this setup is provided in Figure \ref{fig:model}(a).

\begin{figure}
\qquad \qquad \quad
\subfloat[domain decomposition]
{\def\svgwidth{0.35\textwidth}
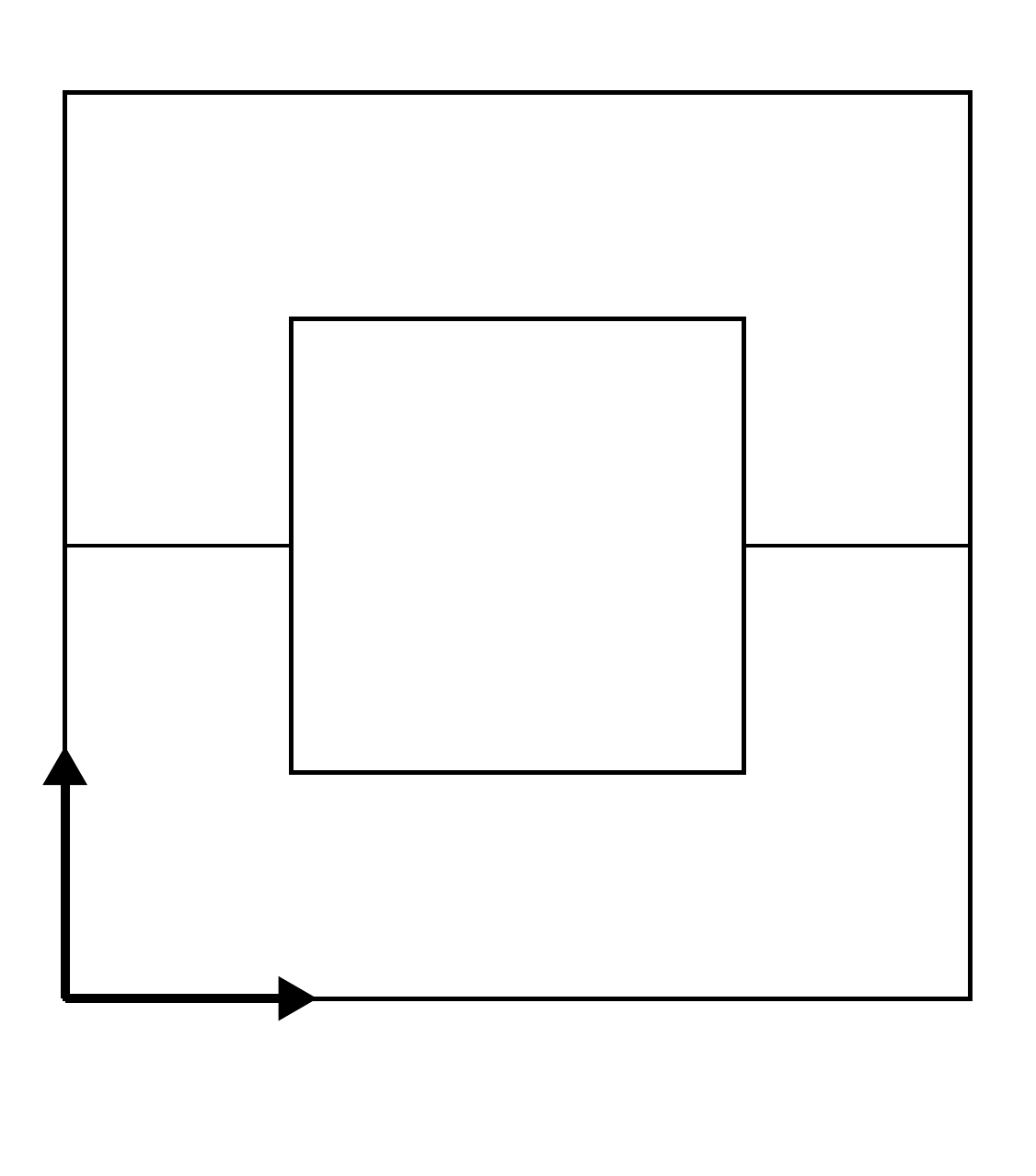}
\qquad \qquad \qquad
\subfloat[sensor placement]
{\includegraphics[width = 0.35 \textwidth]{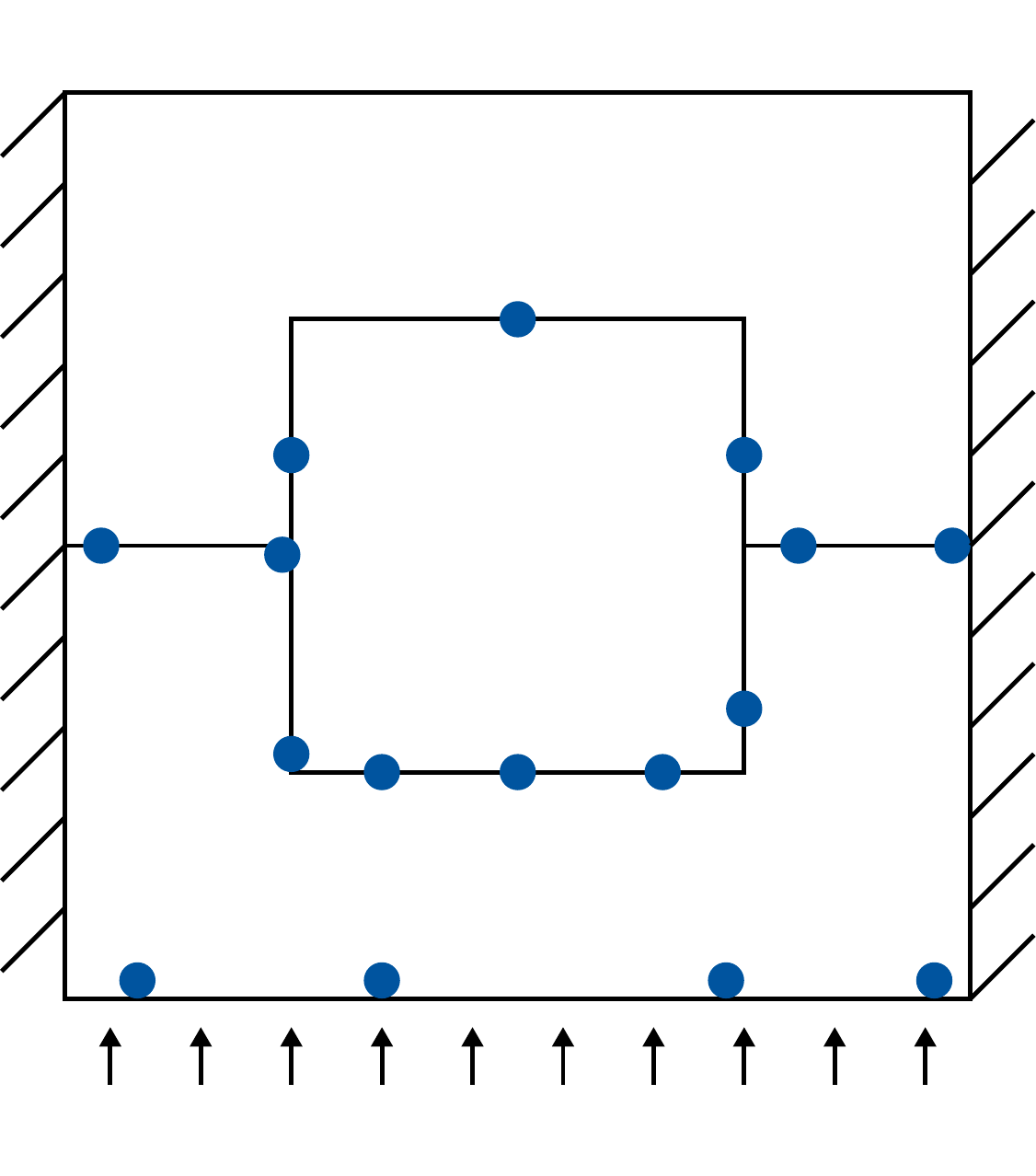} }
\caption{(a) Illustration of the domain and the imposed outer boundary conditions.
(b) Greedy placement of the measurement sensors.}
\label{fig:model}
\end{figure}

For the state space, we define $\myYcont := \{\, y \in \Hkp[1]:\, y|_{\myGdir} = 0\, \}$ with inner product $\mySP[\myYcont]{y}{w} := \mySPY{y}{w} := \int_{\Omega} \nabla y(\mathbf{x}) \cdot \nabla w(\mathbf{x}) d\mathbf{x}$.
For the space discretization in the state variable we use a linear finite element space $\myY$ of dimension 4,210.
The bilinear forms $\mya : \myYcont \times \myYcont \rightarrow \mathbb{R}$ and $b : \myUcont \times \myYcont \rightarrow \mathbb{R}$ associated with the thermal block problem are then given by
\begin{align*}
\mya(y,w) := \sum_{i=0}^2 \mu_i \int _{\Omega_i} \nabla y(\mathbf{x}) \cdot \nabla w(\mathbf{x})~d\mathbf{x}
\quad \quad \quad
b(u,w) := \int _{\myGin} u(\mathbf{x})~w(\mathbf{x})~dS(\mathbf{x}),
\end{align*}
where $\mu_0 := 1$.
Both forms are continuous, and $\mya$ is coercive with $\myalpha = \min \{1,\mu_1,\mu_2\} \ge 0.1$.

For the state and parameter estimation, we assume that the unknown true state $\myytrue \in \myY$ is given as the unique solution of the weak PDE
\begin{align}\label{eq:numExp:true}
\mya[\mymutrue](\myytrue, \psi) = b(\myutrue,\psi) \qquad \qquad \forall ~\psi \in \myY,
\end{align}
with $\myutrue(x_1):= 1.5+0.3\sin(2\pi x_1)$ for $x_1 \in \myGin$, and $\mymutrue = (7,0.3)$.
For the numerical computation of this state we approximate $\myutrue$ with 69 linear finite elements on $\myGin$.
For the best-knowledge model, we presume a steady, homogeneous heat inflow in the form of a uniform Neumann flux $\myustart \equiv 1$ on $\myGin$.
We hence obtain $\myfbk = b(\myustart,\cdot) \in \myY'$.
In this numerical experiment, we aim to estimate $\mymutrue$, $\myutrue$ and $\myytrue$ from a few measurements of $\myytrue$.

\subsection{Space Generation with Prior Knowledge} \label{sec:exp:generation}
In this section, we first specify the framework for the application of the 3D-VAR method.
We then use the approach described in the previous section to generate the RB spaces and the measurement space $\myT$.

Due to the diffusion of heat, state changes brought forth by local details in the Neumann flux smooth out as the distance to the boundary $\myGin$ increases.
The observation of such details would hence necessitate the placement of observation functionals very close to the boundary, and the local reconstruction of the Neumann flux would then be sensitive to noise.
For the model modifications, we hence make the educated guess that $\myutrue$ can be approximated sufficiently in the space $\mathbb{P}_3(\myGin)$ of polynomials on $\myGin$ with degree smaller or equal to 3; accordingly, we fix $\myU := \mathbb{P}_3(\myGin)$ equipped with the $L^2(\myGin)$ inner product.
As a consequence of this decision, we accept that we can approximate $\myutrue$ only up to the accuracy $\myNorm[L^2(\myGin)]{\myProj{\myU^{\perp}} \myutrue} \approx $1.9877e-02.
Similarly, for the true parameter $\mymutrue$, $\myytrue$ can only be approximated upto $\myNormY{\myProj{\myYmu[\mymutrue]^{\perp}}\myytrue} \approx$ 4.7576e-03.

We next generate the measurement space $\myT$ and the RB spaces $\myURB$ and $\myYRB$.
Since we already restricted $\myU$ to a small dimension, we chose $\myURB := \myU$ without discarding any further model modifications.
The state space $\myY_y \subset \myY$ is generated from the first four Legendre polynomials by a weak greedy algorithm over a $41\times 41$ regular training grid on the logarithmic parameter domain with target accuracy $10^{-5}$ relative to the norm of the state solution.
The algorithm terminated with $\dim \myY_y = 64$. We note that if we were to use this state space for a $\mu$-independent PBDW state estimate of $\myytrue$, we would need at least 64 measurement functionals for well-posedness, whereas any number of measurements is sufficient for well-posedness of the 3D-VAR method.

The measurement space $\myT$ was obtained from $\myY_y$ with the greedy-OMP algorithm \ref{alg:greedyOMP}, which selected 16 measurement functionals $g_l \in \myY'$, $l \in \{ 1,...,16\}$.
The library consists of gaussian functionals in $\myY'$ with standard deviation $0.01$ and centres in a $97\times 97$ regular grid on $(0.02,0.98)^2 \subset \Omega$.
We use the target value $\beta_0 := 0.5$ for the inf-sup constants $\mykappaRBmu$ and $\beta_{\myT,\rm{R}}(\mu,\nu)$
with parameters $\mu$, $\nu$ in a $21 \times 21$ grid on $\mathcal{C}$.
Figure \ref{fig:spaces}(a) shows the development of the minimal inf-sup constants $\mykappaRBmu$ 
and $\mykappaRBmu[\mu,\nu]$
over the respective training set as the greedy-OMP Algorithm \ref{alg:greedyOMP} expands $\myT$.
Due to the margin at the domain boundary for the placement of the sensors, we cannot expect the inf-sup constant to reach 1 asymptotically.
Since $\min_{\mu} \mykappaRBmu$ exceeds the target value already for five measurement functionals, most measurements have been chosen to increase $\mykappaRBmu[\mu,\nu]$ over $\mathcal{C}^2$.
The centres of the chosen measurement functionals are indicated in Figure \ref{fig:model}(b).
The four measurements near $\myGin$ are most important for determining the optimal model correction, whereas the other measurements are most important for estimating unknown parameters.
In Figure \ref{fig:spaces}(b), the $\myHeins^0(\mu)$-coercivity constant $\alpha_A^0(\mymutrue,\lambda)$ of $A$ and the corresponding lower bound $\mydeltaLB$ from Theorem \ref{thm:tCoercivity} are plotted over $\lambda$ for the high-fidelity spaces.
We observe that the lower bound closely tracks the behaviour of $\mydelta$.
Asymptotically, $\mydelta$ is larger than $\mydeltaLB$ by a factor of 1.0325.

\begin{figure}
\centering
\subfloat[Greedy-OMP Algorithm]
{\includegraphics[width = .45\textwidth]{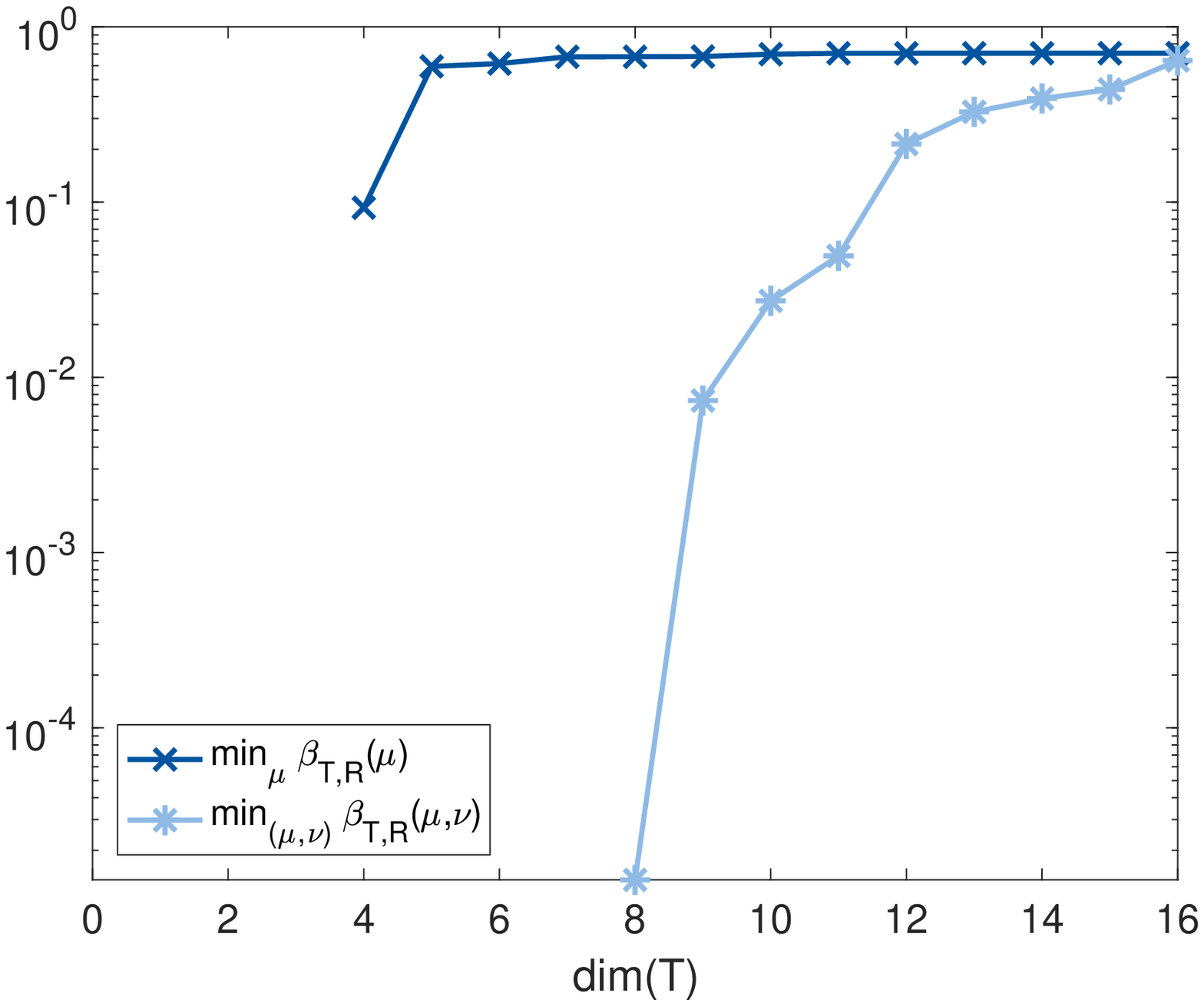} }
~~
\subfloat[$\myHeins^0(\mu)$-coercivity]
{\includegraphics[width = .45\textwidth]{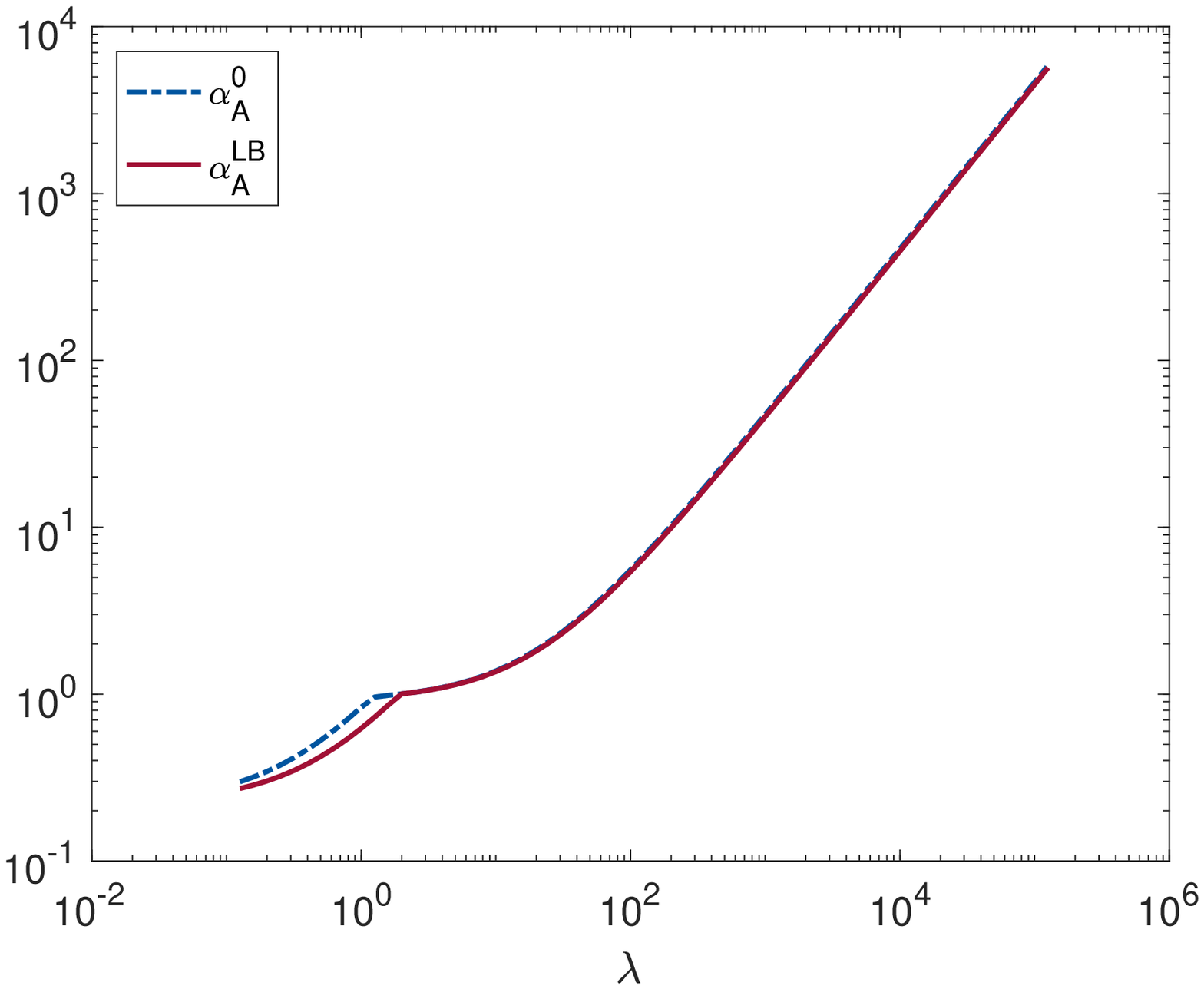} }
\caption{(a) Development of the target variables of the greedy-OMP algorithm versus the dimension of $\myT$.
(b): The $\myHeins^0(\mu)$-coercivity constant $\alpha_A^0(\mu, \lambda)$ versus $\lambda$ and its lower bound $\mydeltaLB(\lambda, \myetainf, \myetasup, \mykappa)$ for $\mu = \mymutrue$.}\label{fig:spaces}
\end{figure}

To finish the generation of $\myYRB$, we expand the state space $\myY_y$ with an adjoint space $\myY_p$ with $\dim \myY_p = 95$.
This space is computed from an orthonormal basis of $\myT$ with an RB approximation of the adjoint equation \eqref{eq:t:pde:2}.
Since prior tests indicate that due to the local influence of the measurement functionals the weak greedy algorithm only chooses training parameters very close to the boundary $\partial \mathcal{C}$, the computation of $\myY_p$ was done using a relative target accuracy of $10^{-5}$ on 40 regularly spaced training parameters on $\partial \mathcal{C}$ in the logarithmic plane.
After the computation of $\myY_p$, the target accuracy was confirmed on a fine test grid over the whole parameter domain.
Altogether, the offline phase for the RB space generation, including the selection of appropriate measurement functionals, finished after 463 seconds with $\dim \myURB = 4$, $\dim \myT = 16$ and $\dim \myYRB = 159$.

\subsection{Reduced Basis Approximation}
In this section, we evaluate 1) the computational efficiency of the RB solution, 2) its accuracy and the effectivity of the a posteriori error bounds, and 3) the effects of noise on the approximation of $\myutrue$.
To this end, we take measurements $\mathbf{m}_{\rm{d}} = (g_l(\myytrue))_{l=1}^{16} \in \mathbb{R}^{16}$ of the true state to compute $\mydata = \myProj{\myT}\myytrue$.
Noisy measurements are constructed by adding a random variable drawn from the probability distribution $\mathcal{N}(0,0.01^2)$ upon each measurement before the computation of $\mydata$.
This corresponds to a noise level of approximately 1.5\% in each measurement compared to the difference to the measurements of the best-knowledge state $\myybk[\mymutrue]$.

First, we evaluate the \blue{computational efficiency} of the RB method \blue{in comparison to} the truth 3D-VAR method:
For 200 random parameters in $\mathcal{C}$ and $\lambda = 100$, the truth 3D-VAR solution was computed from different noisy measurements.
The mean computation time was 7.08 $s$.
The RB 3D-VAR solution was then computed over the same parameters with the same noisy data for comparison.
With an average online computation time of 4.2 $ms$ for the computation of the RB 3D-VAR solution and 1.3 $ms$ for the computation of the a posteriori error bounds, the RB method showed a mean speedup of 1,276.

\begin{figure}
\centering
\subfloat[model correction]
{\includegraphics[width = 0.45\textwidth]{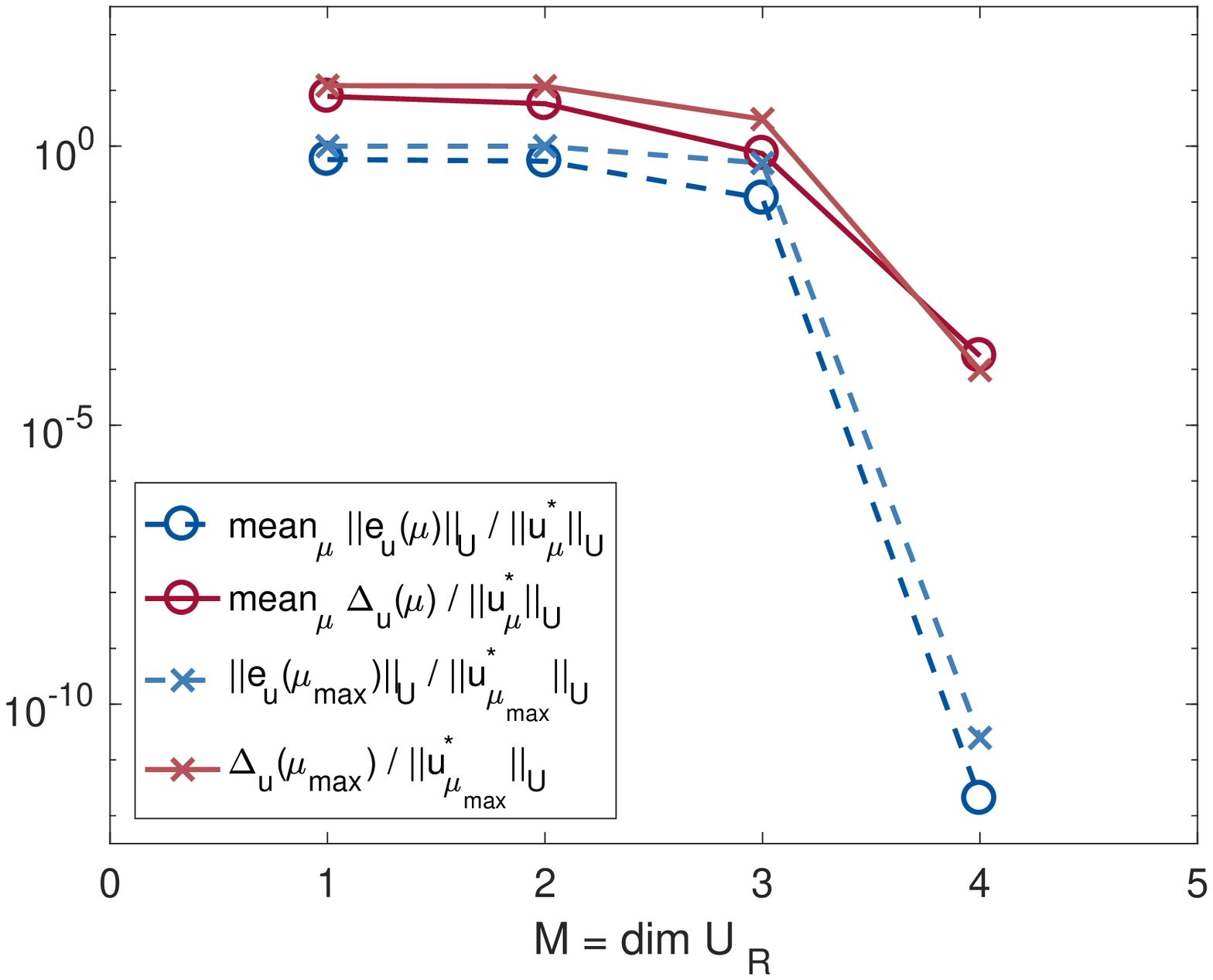} }
~~
\subfloat[state]
{\includegraphics[width = 0.45\textwidth]{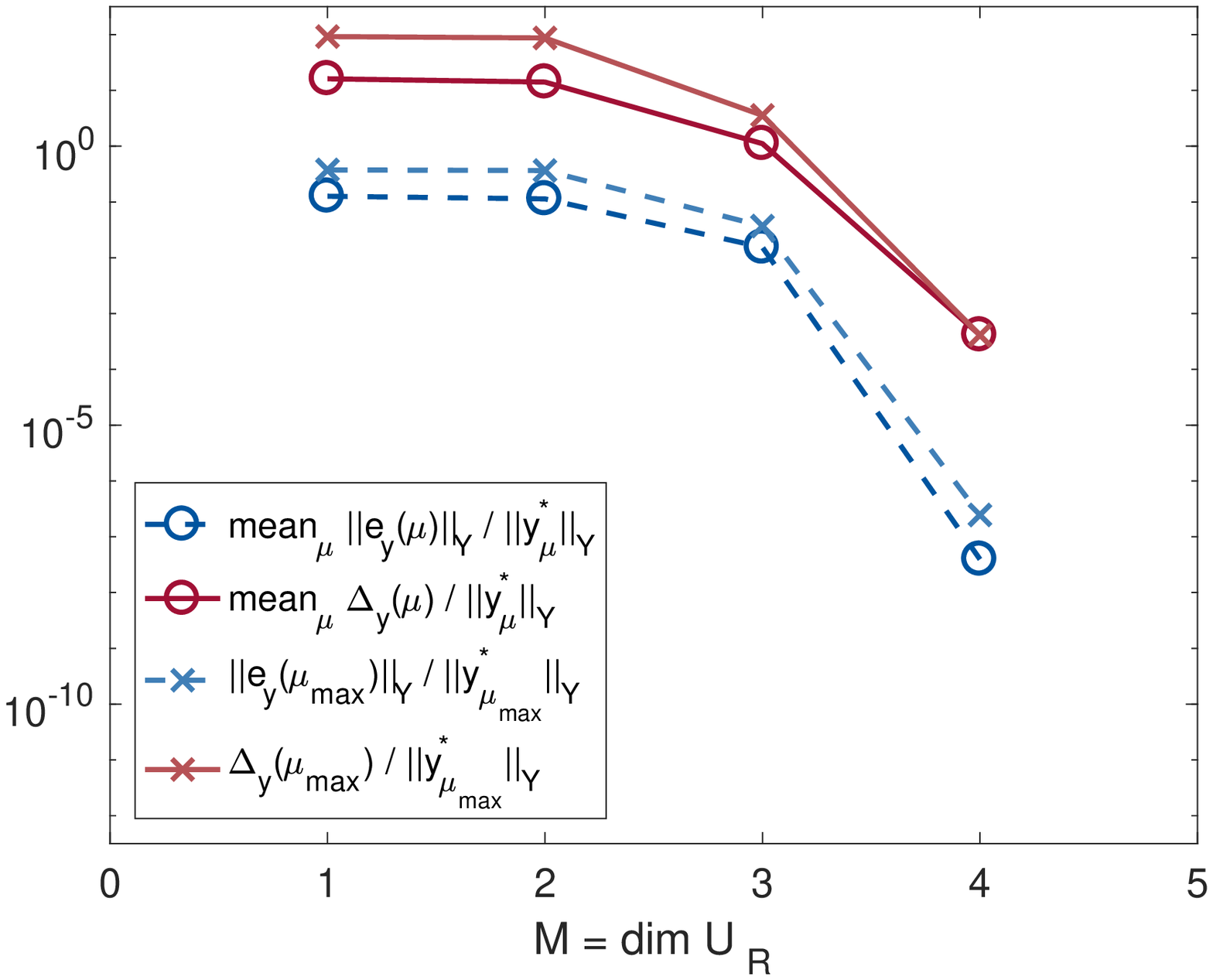} }

\subfloat[observable misfit]
{\includegraphics[width = 0.45\textwidth]{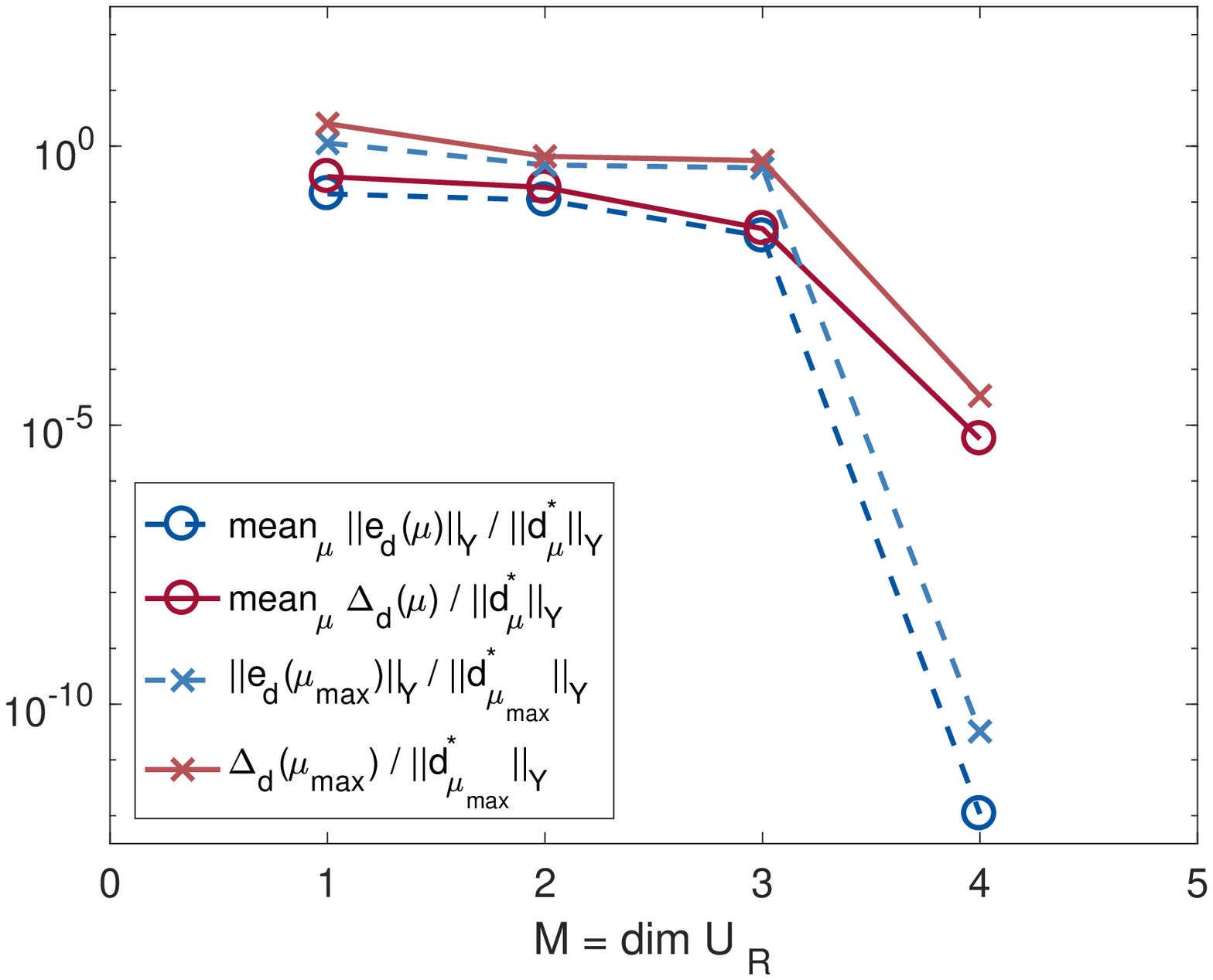} }
~~
\subfloat[adjoint]
{\includegraphics[width = 0.45\textwidth]{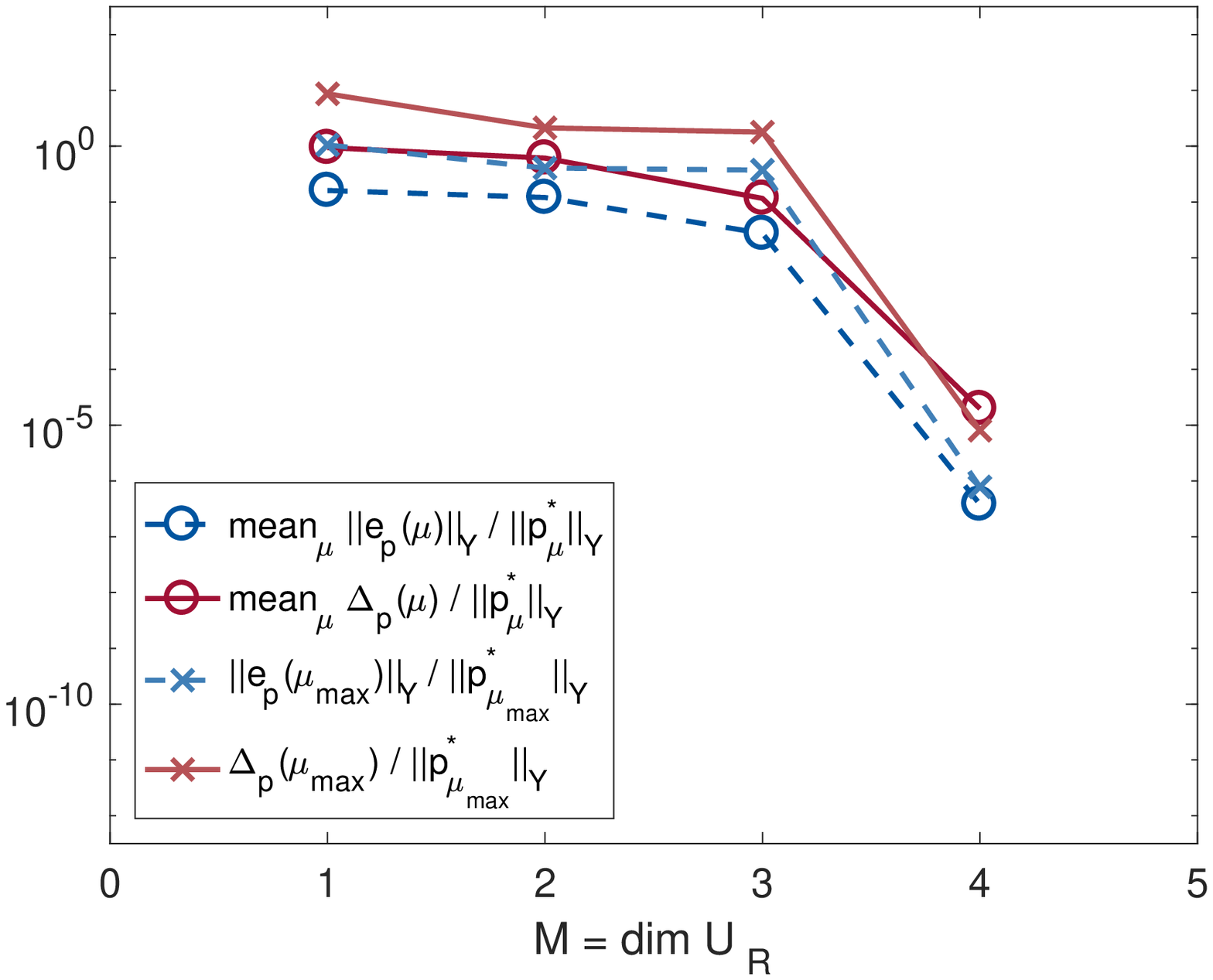} }

\caption{
Mean relative error \blue{(blue, dashed, o-marks)} and relative error bound \blue{(blue, dashed, x-mark)} in the RB approximation for $\lambda = 100$ from noisy data for the model correction, state, observable misfit, and adjoint for the stepwise approach with different $\myURB = \mathbb{P}_j(\myGin)$, $j=1,...,4$.
\blue{
In each plot the red, continuous, o-marked curve marks the maximum of the relative truth error (labelled shorthand as evaluation at $\mu_{\max}$) for each $j$, whereas the red, continuous, x-marked curve shows the corresponding relative error bound evaluated at the same parameter and the same $j$.
}
The mean and maximum are computed over 200 random parameters.
}
\label{fig:exp:effectivity}
\end{figure}

To assess the effectivity of the a posteriori error bounds, we generate additional pairs $(\myURB^j,\myYRB^j) \subset \myU \times \myY$, $j \in \{1,2,3\}$ of RB spaces with $\myURB^j = \mathbb{P}_{j-1}(\myGin)$ the span of polynomials on $\myGin$ of degree smaller or equal to $j-1$.
We kept the measurement space $\myT$ fixed, but otherwise followed the approach as outlined in section \ref{sec:exp:generation} to obtain $\myYRB^j$.
For the same parameters and the same noisy data as before, we evaluate the error between the truth 3D-VAR solution and the RB solution on these additional spaces.
Figure \ref{fig:exp:effectivity} shows the mean error and mean a posteriori error bound relative to the norm of the solution variable versus the dimension $j = \dim \myURB^j$ of the RB model correction space.
Additionally, the maximum relative error and the corresponding relative error bound are indicated.
As the polynomial $x^3$ cannot be approximated in $\myURB^j$ for $j \le 3$, the large portion of $x^3$ in the truth 3D-VAR model modification $\myu$ leads to the strong decrease in the error for $\dim \myURB = 4$.
We observe that the effectivity of the error bounds is generally small and constant, except for the case $\myURB = \myU$ where $\myeu$ and $\myed$ lie below the natural limits of the error bounds $\myDu$ and $\myDd$ dictated by the square root of the machine accuracy.
For different regularisation parameters $\lambda \le 10^3$, we observe a posteriori error bounds of a similar magnitude.

\begin{figure}
\centering
\subfloat[$\lambda = 1$]
{\includegraphics[width = 0.45\textwidth]{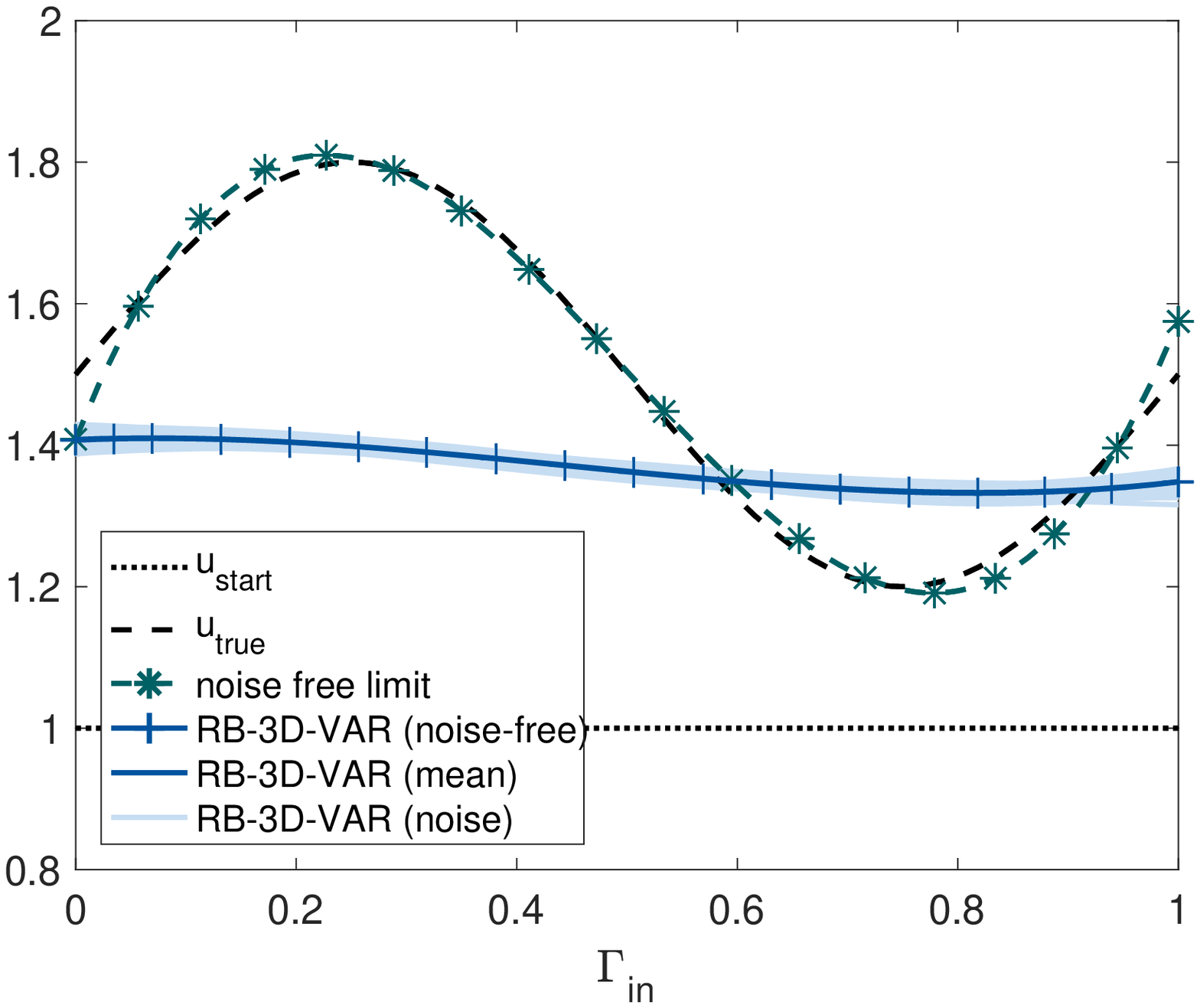} }
~~~
\subfloat[$\lambda = 10$]
{\includegraphics[width = 0.45\textwidth]{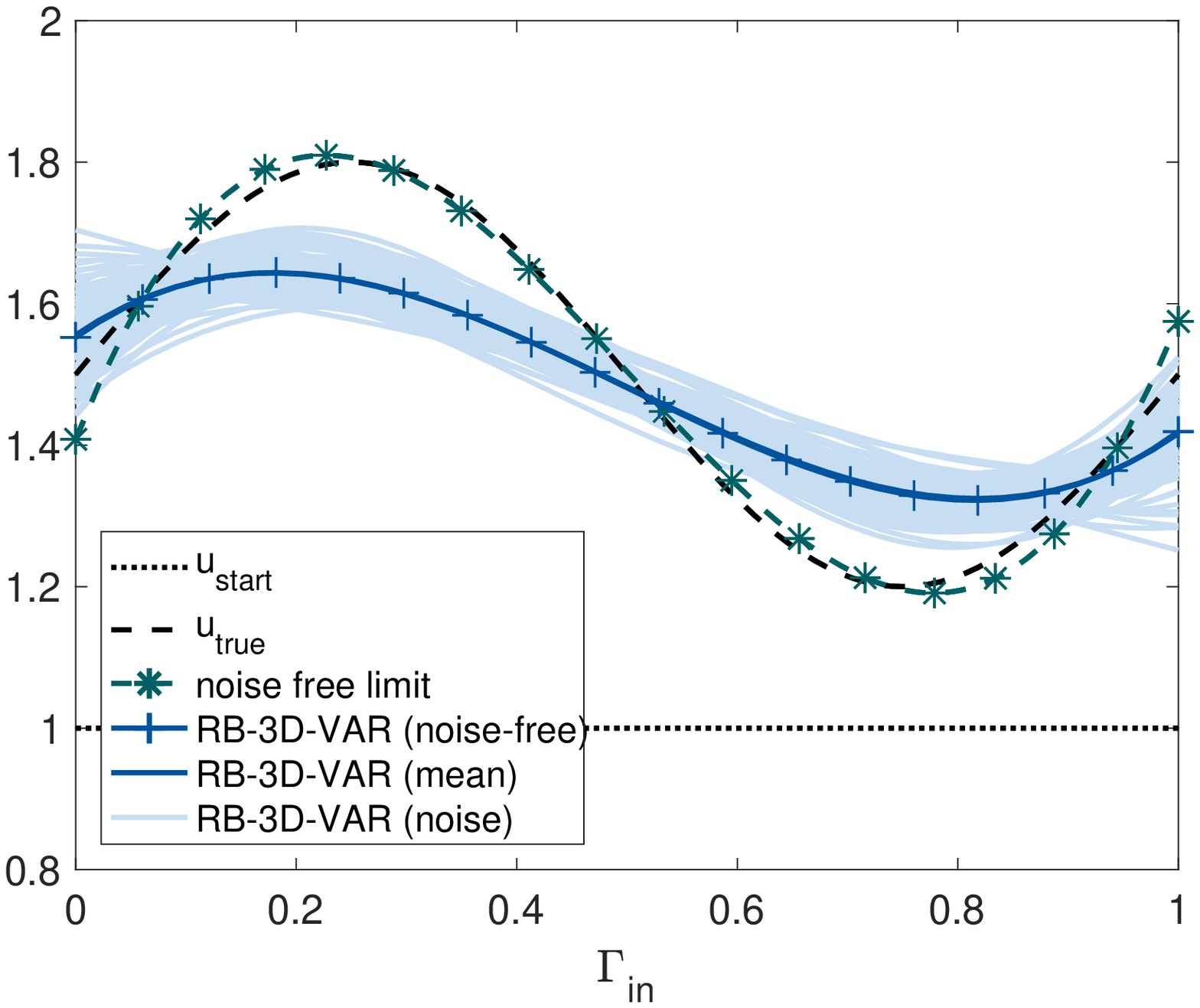} }

\subfloat[$\lambda = 100$]
{\includegraphics[width = 0.45\textwidth]{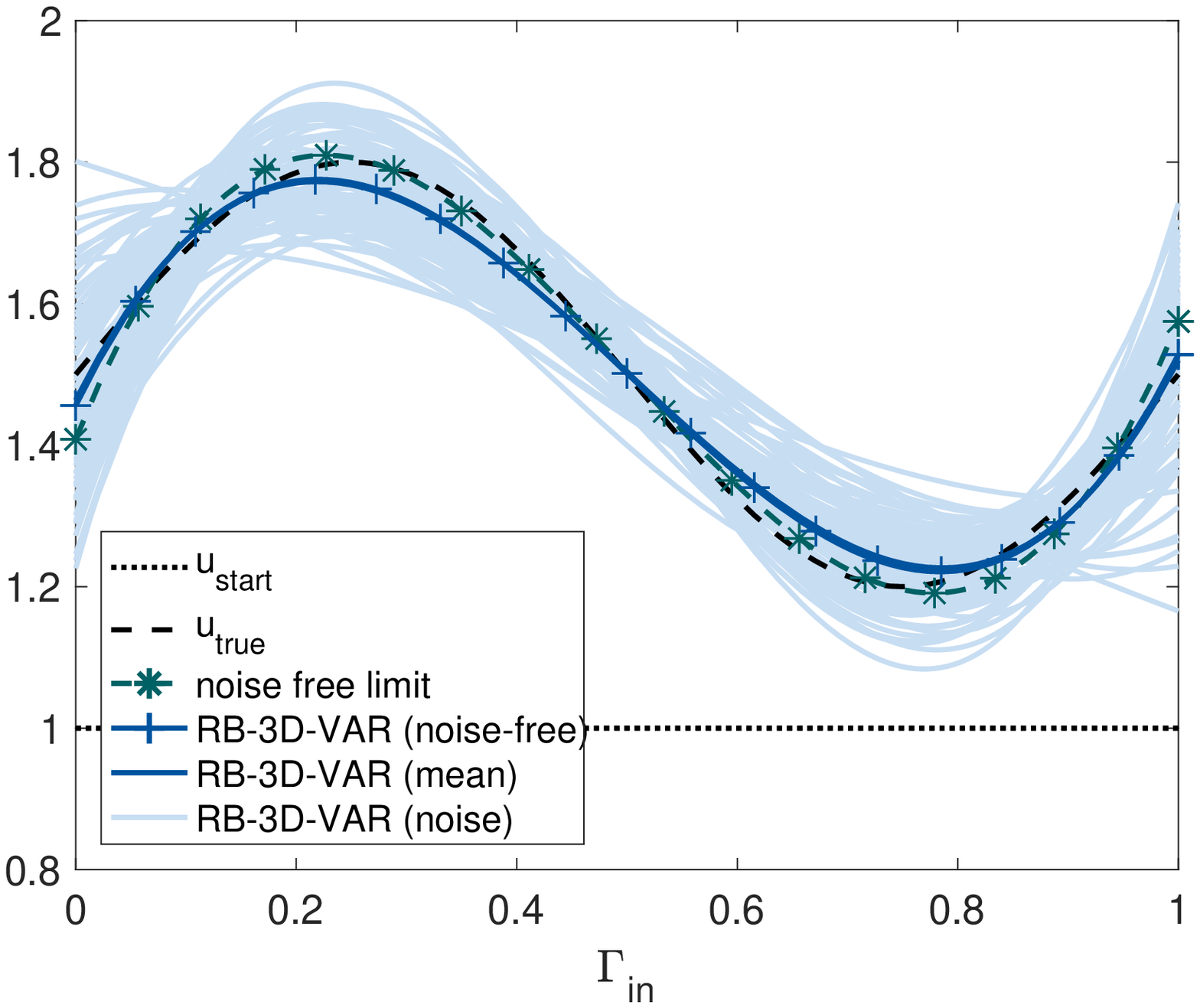} }
~~~
\subfloat[$\lambda = 1000$]
{\includegraphics[width = 0.45\textwidth]{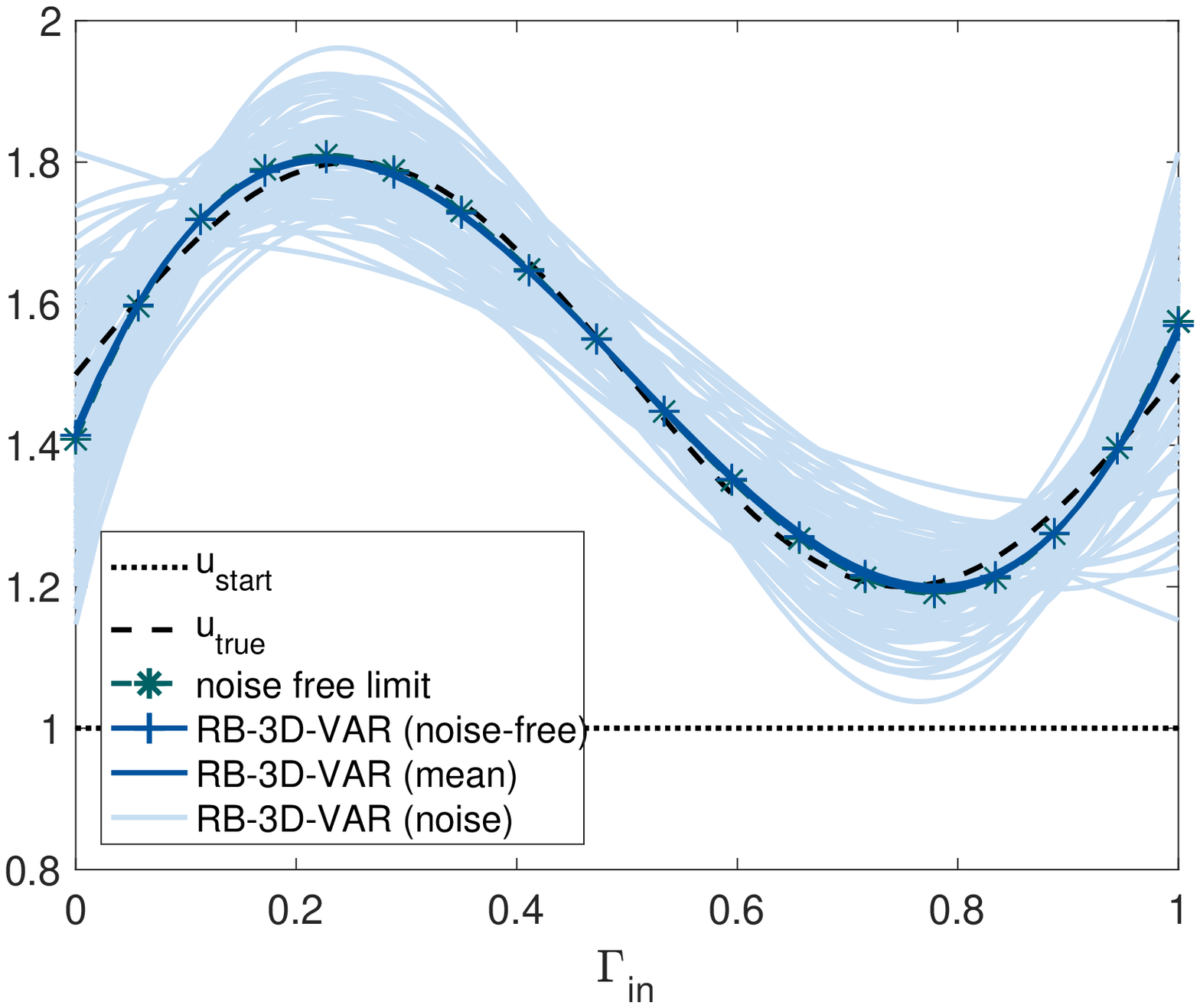} }
\caption{Qualitative behaviour of the 3D-VAR solution as an approximation of $\myutrue$ for the parameter $\mymutrue$.}
\label{fig:utrue:approx}
\end{figure}

Given that the RB model correction $\myuRB$ approximates $\myu$ so accurately, we use the RB method to investigate how well the qualitative, global behaviour of the true Neumann flux $\myutrue$ can be reflected by the 3D-VAR method, if the true parameter $\mymutrue$ is provided.
We hence compute $\myuRB[\mymutrue]$ for 100 different noisy data sets and for regularisation parameters $\lambda = 10^i$, $i=0,...,3$.
The results are shown in Figure \ref{fig:utrue:approx}.
We observe that for $\lambda = 1$, we obtain a good compromise between $\myutrue$ and our initial guess $\myustart \equiv 1$ in the best-knowledge model, and the measurement error only leads to small deviations.
As we increase $\lambda$ in favour of the biased measurements, the estimates start to differ, but, as can be seen when comparing \blue{$\lambda = 10^2$} to \blue{$\lambda = 10^3$}, the difference to the noise-free solution is bounded.

\subsection{Parameter Estimation}
\label{sec:paraEst}
We finally test three different approaches for parameter estimation with the 3D-VAR method to obtain approximations for $\mymutrue = (7,0.3)$, $\myutrue$ and $\myytrue$; to assess the estimation quality in general, we first use noise-free data.
As the involved minimization process takes a long time to converge, we employ the RB 3D-VAR method for comparison.
Finally, we evaluate the sensitivity of the parameter estimate with respect to noise in the data.

We define three different cost functionals $J_i^{\lambda} : \mathcal{C} \rightarrow \mathbb{R}$ by
\vspace{-1ex}
\begin{equation} \label{eq:J:truth}
\begin{aligned}
\myJeins(\mu) &:= \frac12 \myNormU{\myu}^2 + \frac{\lambda}{2} \myNormY{\myd}^2
\qquad \qquad
\myJdrei(\mu) := \frac12 \myNormY{\myd}^2 \\
\myJzwei(\mu) &:= \frac12 \myNorm[2]{\mathbf{v}_{\rm{d}}-(g_l(\myy))_{l=1}^{16}}^2
\end{aligned}
\end{equation}
where $(\myu,\myy,\myw)\in \myU \times \myY \times \myY$ is the 3D-VAR truth solution \eqref{prob:truth:sp} and $\myd = \mydata-\myProj{\myT}\myy$ is the observable misfit.
We then approximate $\mymutrue$ with
\begin{align}
\mymui ~\in~ \text{arg} ~ \min _{\mu \in \mathcal{C}} \myJi (\mu), \qquad i \in \{1,2,3\}. \label{eq:min:J}
\end{align}
We thus obtain a bilevel optimization problem whose inner optimization requires the solution of the 3D-Var problem
For $i=2$, \eqref{eq:min:J} finds the parameter $\mymuzwei \in \mathcal{C}$, for which the measurements $(g_l(\myy[\mymuzwei]))_{l=1}^{16}$ are closest to the actual measurement values $\mathbf{v}_{\rm{d}}$.
In contrast, for $i=3$ the state $\myy[\mymudrei]$ is closest to $\mydata + \myT ^{\perp}$ over all 3D-VAR state solutions.
Due to our prior choice for $\myU$ with $\myutrue \notin \myU$, the measurements are not necessarily obtainable and we cannot expect $\mymui$ to converge to $\mymutrue$ for $\lambda \rightarrow \infty$.

We first consider the truth parameter estimation problem. The problems in \eqref{eq:min:J} for $i \in \{1,2,3\}$ were solved within $\mathcal{C}$ from the starting point $(\mu_1,\mu_2)=(1,1)$ using the matlab function \texttt{fminsearch}\blue{, which uses the gradient-free simplex search algorithm described in \cite{simplex}}.
We chose the target accuracy of 1e-12 in both the minimum value and the minimizing parameter.
Each minimization took between 25 and 28 minutes.
Table \ref{table:muapprox:truth} shows the results obtained for $\lambda = 10^j$, $j=0,...,3$.
Generally, the parameter estimate improves as $\lambda$ increases and the 3D-VAR solution favours closeness to the measurement data, with $i=3$ providing the best parameter estimate.
However, its accuracy appears to stagnate for $\lambda > 10^2$, presumably because of $\myutrue \notin \myU$.
The other two problems yielded comparative results for $\lambda = 1$, but did not improve as fast as the third for larger $\lambda$.

Concerning $\myutrue$ and $\myytrue$, we observe that the approximation through the 3D-VAR solution at the parameter estimate $\mymui$ improves with increasing $\lambda$ and comes close to the best-fit errors $\myNorm[L^2(\myGin)]{\myProj{\myU^{\perp}} \myutrue} \approx $1.9877e-02 and $\myNormY{\myProj{\myYmu[\mymutrue]^{\perp}}} \approx$ 4.7576e-03, which resulted from limiting $\myU$ to $\mathbb{P}_3(\myGin)$.
Note that even though the parameter estimate for $i=3$ becomes slightly worse when changing from $\lambda = 10^2$ to $10^3$, the approximation of $\myutrue$ and $\myytrue$ still improves.

\begin{table}
\caption{
The parameter estimates obtained from solving the minimizations \eqref{eq:min:J} for different $\lambda$, the logarithmic distance to the true parameter $\mymutrue$, the number of function evaluations, and the approximation quality of $\myutrue$ and $\myytrue$ for $\mu = \mymui$ through $u_{\mu}:=\myustart+\myu$ and $\myy$.
}
\label{table:muapprox:truth}       
\begin{tabular}{cc|cccccc}
\hline\noalign{\smallskip}
$i$ & $\lambda$  & $(\mymui)_1$ & $(\mymui)_2$& log. dist. & eval. & $\myNormU{\myutrue - u_{\mymui}}$ & $\myNormY{\myytrue-\myy[\mymui]}$ \\
\noalign{\smallskip}\hline\noalign{\smallskip}
1 		& $10^0$ & 4.7545 & 0.2173 & 2.1878e-01 &  235 & 4.5524e-01 & 2.3916e-01 \\
  		& $10^1$ & 6.0571 & 0.2666 & 8.1061e-02 &  224 & 1.9754e-01 &   9.8988e-02 \\
  		& $10^2$ & 6.8618 & 0.2952 & 1.1115e-02 &  219 & 3.5157e-02 & 1.5374e-02   \\
  		& $10^3$ & 6.9896 & 0.2997 & 8.1349e-04 &  226 &2.1062e-02  &  5.0379e-03  \\
\noalign{\smallskip}\hline\noalign{\smallskip}
2  		& $10^0$ & 3.2061 & 0.2370 & 3.5420e-01 &  228 &4.0448e-01  & 2.7434e-01   \\
	  	& $10^1$ & 5.6337 & 0.2928 & 9.4896e-02 &  225 &1.1741e-01  &  7.9138e-02  \\
  		& $10^2$ & 6.8335 & 0.2997 & 1.0461e-02 &  243 & 2.6032e-02 &  1.1198e-02  \\
  		& $10^3$ & 6.9876 & 0.3001 & 7.9396e-04 &  238 & 2.1109e-02 & 5.0631e-03   \\
\noalign{\smallskip}\hline\noalign{\smallskip}
3  		& $10^0$ & 5.6402 & 0.2513 & 1.2134e-01 &  226 &3.4210e-01  & 1.9846e-01   \\
 		& $10^1$ & 6.9380 & 0.2980 & 4.8032e-03 &  232 & 1.0528e-01 & 5.2142e-02   \\
  		& $10^2$ & 7.0038 & 0.3002 & 3.3277e-04 &  235 & 2.5843e-02 &  9.1612e-03  \\
  		& $10^3$ & 7.0047 & 0.3002 & 3.9092e-04 &  228 & 2.1138e-02 & 5.0527e-03   \\
\noalign{\smallskip}\hline
\end{tabular}
\end{table}

To speed up the parameter estimation, we replace the truth 3D-Var problem with its RB approximation and consider the ``reduced'' cost functionals
\begin{equation}\label{eq:J:RB}
\begin{aligned}
\myJeinsRB(\mu) &:= \frac12 \myNormU{\myuRB(\mu)}^2 + \frac{\lambda}{2} \myNormY{\mydRB(\mu)}^2
\quad \qquad \myJdreiRB(\mu) := \frac12 \myNormY{\mydRB(\mu)}^2 \\
\myJzweiRB(\mu) &:= \frac12 \myNorm[2]{\mathbf{v}_{data}-(g_l(\myyRB(\mu)))_{l=1}^{16}}^2 \\
\end{aligned}
\end{equation}
where for $\mu \in \mathcal{C}$ we have $(\myuRB,\myyRB,\mywRB)\in \myURB \times \myYRB \times \myYRB$ is the RB 3D-VAR solution and $\mydRB = \mydata-\myProj{\myT}\myyRB$ is the misfit.
We then take
\begin{align}
\mymuiRB &~\in~ \text{arg} ~ \min _{\mu \in \mathcal{C}} \myJiRB (\mu) && i \in \{1,2,3\}, \label{eq:min:JRB}
\end{align}
as RB parameter estimate.
The distance between $\mymuiRB$ and $\mymui$ using noise-free data, as well as the computational time for the RB parameter estimate and the corresponding speedup compared to using \eqref{eq:J:truth} are listed in Table \ref{table:muapprox:noise}(a).
The approximation of $\mymui$ through $\mymuiRB$ was very precise with a maximal logarithmic distance of 3.3e-08 in the parameter domain.
In each case, the minimization finished within 1.1 s, \blue{resulting in} speedup-factors between 1,498 and 1,680 compared to \blue{the truth evaluation} \eqref{eq:J:truth} \blue{requiring 25 - 28 min per parameter estimate}.
\blue{
The speedup thus justifies the initial offline cost of 7.7 min for the RB space generation, especially when considering that the RB spaces need only be generated once and can then be used ({\cal i}) for all combinations of $\lambda$ and $i$ in \eqref{eq:min:JRB} (and also other cost functions), thereby allowing for conclusions to be drawn from the comparison of multiple parameter estimates; and ({\cal ii}) to repeat the parameter estimation repeatedly for different measurement data.
}

\begin{table}
\caption{
(a) Parameter estimation with the RB 3D-VAR method for noise-free data: Distance to the truth parameter estimate $\mymui$ in the logarithmic parameter plane, the computational time, and the speedup compared to \eqref{eq:J:truth}.
(b) Minimum, mean and maximum distance on the logarithmic parameter plane between RB parameter estimates obtained from noisy data, and the unbiased RB parameter estimate.}
\label{table:muapprox:noise}       
\begin{tabular}{cc|ccc|ccc}
\hline\noalign{\smallskip}
~ & ~ & \multicolumn{3}{c}{(a)} & \multicolumn{3}{c}{(b)} \\
~ & ~ & \multicolumn{3}{c}{noise-free data} & \multicolumn{3}{c}{noisy data (log. dist. to noise-free $\mymuiRB$)} \\
$i$ & $\lambda$  & dist. to $\mymui$ & time [s] & speedup & min & mean & max \\
\noalign{\smallskip}\hline\noalign{\smallskip}
1 		& $10^0$ & 3.2603e-08 & 1.1017 & 1498.8 & 9.5939e-04 & 2.1314e-02 & 8.1847e-02 \\
  		& $10^1$ & 4.6456e-09 & 1.0347 & 1527.1 & 1.8072e-03 & 2.4407e-02 & 9.0962e-02 \\
  		& $10^2$ & 7.4937e-09 & 1.0147 & 1489.7 & 2.0184e-03 & 2.6270e-02 & 9.5310e-02 \\
  		& $10^3$ & 6.7160e-10 & 1.0178 & 1576.8 & 1.7981e-03 & 2.6575e-02 & 9.5909e-02 \\
\noalign{\smallskip}\hline\noalign{\smallskip}
2  		& $10^0$ & 4.5371e-09 & 0.9358 & 1679.5 & 7.2490e-04 & 1.3379e-02 & 4.4470e-02 \\
	  	& $10^1$ & 1.4731e-08 & 0.9443 & 1662.7 & 1.6227e-03 & 2.0544e-02 & 6.9932e-02 \\
  		& $10^2$ & 5.5204e-09 & 1.0675 & 1608.7 & 2.7511e-03 & 2.4773e-02 & 8.4723e-02 \\
  		& $10^3$ & 2.9233e-09 & 1.0386 & 1587.1 & 2.9854e-03 & 2.5363e-02 & 8.6687e-02 \\
\noalign{\smallskip}\hline\noalign{\smallskip}
3  		& $10^0$ & 2.1966e-08 & 0.9785 & 1591.7 & 1.0941e-03 & 2.3011e-02 &8.7283e-02  \\
 		& $10^1$ & 1.3851e-08 & 0.9963 & 1666.6 & 2.4658e-03 & 2.6271e-02 & 9.5628e-02 \\
  		& $10^2$ & 2.9369e-09 & 0.9434 & 1710.8 & 1.7993e-03 & 2.6604e-02 & 9.5979e-02 \\
  		& $10^3$ & 2.9860e-09 & 0.9914 & 1613.8 & 1.7759e-03 & 2.6611e-02 & 9.5978e-02 \\
\noalign{\smallskip}\hline
\end{tabular}
\end{table}

Given the precise approximation quality, we then use the RB method to obtain parameter estimates from noisy data for 100 different noise vectors.
Table \ref{table:muapprox:noise}(b) provides the minimum, mean and maximum distance between the RB parameter estimates $\mymui$ obtained from noisy and noise-free data.
We observe that the noise influenced the parameter estimate independently of $\lambda$ and $i$.
Further analysis showed that there was more deviation in the $(\mymuiRB)_1$-argument than in $(\mymuiRB)_2$, and that the mean over the biased parameter estimates converged against the noise-free $\mymuiRB$ when increasing the number of samples.


\section{Conclusion}
In this paper, we have proposed and analyzed a data-weak variant of the 3D-VAR method for parametrized PDEs. Through a data-informed perturbation of the model, the method generates an intermediate state between the best-knowledge model solution and the observation in the measurement space. We have reformulated the 3D-VAR method as a saddle-point problem and performed a stability analysis to reveal the relationship between the 3D-VAR method, the model, and the measurement space. In particular, we showed a necessary and sufficient condition on the design of the measurement space that results in improved stability and the mitigation of noise amplification. This condition can be used in the modelling process and in the experimental design for the selection of suitable model modifications and measurement functionals.

For an efficient computation in a parametrized real-time or many-query setting, a certified RB method was introduced. We developed {\it a posteriori} error bounds for the computationally efficient approximation of the error in the model modification, state, adjoint, and observable misfit between the truth and the RB solution. We proposed a greedy-OMP algorithm for choosing the measurement space and a construction of the RB spaces which does not require the measurement data to be known {\it a priori}. We presented numerical results for parameter and state estimation for a steady heat conduction problem with uncertain parameters and an unknown Neumann boundary condition. The numerical results confirm the validity of our approach as well as the theoretical findings.


\bibliographystyle{spmpsci}       
\bibliography{literature}   

\end{document}